\newtheorem{theorem}{Theorem}[section]
\newtheorem{corollary}[theorem]{Corollary}
\newtheorem{proposition}[theorem]{Proposition}
\newtheorem{lemma}[theorem]{Lemma}
\newtheorem{question*}{Question}
\newtheorem{problem*}{Problem}
\newtheorem*{hypothesis*}{Hypothesis}
\newtheorem*{definition*}{Definition}
\theoremstyle{definition}
\theoremstyle{remark}
\newtheorem{remark}[theorem]{Remark}
\numberwithin{equation}{section}
\crefname{figure}{Figure}{Figures}
\theoremstyle{plain}
\newtheorem*{theorem*}{Theorem}
\newtheorem*{lemma*}{Lemma}
\crefname{theorems}{Theorem}{Theorems}
\crefname{corollaries}{Corollary}{Corollaries}
\newtheorem*{corollary*}{Corollary}
\crefname{corollaries*}{Corollary}{Corollaries}
\crefname{lemma}{Lemma}{Lemmata}
\crefname{proposition}{Proposition}{Propositions}
\crefname{conjectures}{Conjecture}{Conjectures}
\newtheorem*{conjecture*}{Conjecture}
\crefname{conjectures*}{Conjecture}{Conjectures}
\crefname{definitions}{Definition}{Definitions}
\crefname{hypotheses}{Hypothesis}{Hypotheses}
\renewcommand{\tilde}{\widetilde}
\renewcommand{\bar}{\overline}
\renewcommand{\epsilon}{\varepsilon}
\renewcommand{\Re}{\mathrm{Re}}
\renewcommand{\Im}{\mathrm{Im}}
\newcommand{\re}{\Re}
\newcommand{\im}{\Im}
\newcommand{\GL}{\mathrm{GL}}
\newcommand{\N}{\mathrm{N}}
\newcommand{\diag}{\mathrm{diag}}
\newcommand{\sgn}{\mathrm{sgn}}
\newcommand{\A}{\mathbb{A}}
\newcommand{\Z}{\mathbb{Z}}
\newcommand{\Q}{\mathbb{Q}}
\newcommand{\R}{\mathbb{R}}
\newcommand{\CC}{\mathbb{C}}
\newcommand{\ka}{\mathfrak{a}}
\newcommand{\kp}{\mathfrak{p}}
\newcommand{\kq}{\mathfrak{q}}
\newcommand{\cL}{\mathcal{L}}
\newcommand{\cO}{\mathcal{O}}
\title{Tatuzawa's theorem for Rankin--Selberg $L$-functions}
\author{Gergely Harcos}
\address{Alfr{\'e}d R{\'e}nyi Institute of Mathematics, POB 127, Budapest H-1364, Hungary}
\email{\href{mailto:gharcos@renyi.hu}{gharcos@renyi.hu}}
\author{Jesse Thorner}
\address{Department of Mathematics, University of Illinois, Urbana, IL 61801, USA}
\email{\href{mailto:jesse.thorner@gmail.com}{jesse.thorner@gmail.com}}
\begin{document}

\begin{abstract}
Let $\pi$ and $\pi'$ be unitary cuspidal automorphic representations of $\GL(n)$ and $\GL(n')$ over a number field $F$. We establish a new zero-free region for all $\GL(1)$-twists of the Rankin--Selberg $L$-function $L(s,\pi\times\pi')$, generalizing Tatuzawa's refinement of Siegel's work on Dirichlet $L$-functions.  As a corollary, we show that for all $\epsilon>0$, there exists an effectively computable constant $c>0$ depending only on $(n,n',[F:\Q],\epsilon)$ such that $L(s,\pi\times\pi')$ has at most one zero (necessarily simple) in the region
\[
\re(s)\geq 1-c/(C(\pi)C(\pi')(|\im(s)|+1))^{\epsilon},
\]
where $C(\pi)$ and $C(\pi')$ are the analytic conductors.  A crucial component of our proof is a new standard zero-free region for any twist of $L(s,\pi\times\tilde{\pi})$ by an idele class character $\chi$ apart from a possible single exceptional zero (necessarily real and simple) that can occur only when $\pi\otimes\chi^2=\pi$.  This extends earlier work of Humphries and Thorner.
\end{abstract}

\thanks{GH was supported by the MTA--HUN-REN RI Lend{\"u}let Automorphic Research Group and NKFIH (National Research, Development and Innovation Office) grant K~143876. JT was partially supported by the National Science Foundation (DMS-2401311) and the Simons Foundation (MP-TSM-00002484).}

\subjclass[2020]{Primary 11M41; Secondary 11F66, 11F70}

\maketitle

\section{Introduction and the main result}

This paper continues the work in \cite{HarcosThorner} on zero-free regions for Rankin--Selberg $L$-functions.  Let $F$ be a number field with absolute discriminant $D_F$ and adele ring $\A_F$. By a unitary cuspidal ($L^2$-)automorphic representation $\pi$ of $\GL_n(\A_F)$, we mean an irreducible closed invariant subspace of $L^2_\mathrm{cusp}(\GL_n(F)\backslash\GL_n(\A_F),\omega)$ under the right regular representation, where $\omega=\omega_\pi$ is a unitary Hecke character called the central character of $\pi$. Let $\mathfrak{F}_n$ be the set of these representations $\pi$.  Given $\pi\in\mathfrak{F}_n$, let $\tilde{\pi}\in\mathfrak{F}_n$ be the contragredient, and let $L(s,\pi)$ be the standard $L$-function of $\pi$.  Let $C(\pi)\geq 3$ denote the analytic conductor of $\pi$.  Given $(\pi,\chi)\in\mathfrak{F}_n\times\mathfrak{F}_1$, let $\pi\otimes\chi$ be the (unique) element of $\mathfrak{F}_n$ whose space consists of the functions $f(x)\chi(\det x)$, where $f(x)$ is from the space of $\pi$. It is straightforward to check that $\pi\otimes\chi$ is isomorphic to the representation $g\mapsto\pi(g)\chi(\det g)$. For future convenience, as in \cite{HarcosThorner}, we introduce $\mathfrak{F}_n^*$, the set of $\pi\in\mathfrak{F}_n$ for which $\omega_{\pi}$ is trivial on the diagonally embedded positive reals.  For each $\pi\in\mathfrak{F}_n$, there exists a unique pair $(\pi^*,t_{\pi})\in\mathfrak{F}_n^*\times\R$ such that
\[
\pi=\pi^*\otimes|\cdot|^{it_{\pi}},\qquad L(s,\pi) = L(s+it_{\pi},\pi^*).
\]

Given $(\pi,\pi')\in\mathfrak{F}_n\times\mathfrak{F}_{n'}$, let $L(s,\pi\times\pi')$ be the associated Rankin--Selberg $L$-function, whose basic properties were established by Jacquet, Piatetski-Shapiro, and Shalika~\cite{JPSS,JS1,JS2}.  If $(\pi,\pi')\in\mathfrak{F}_n^*\times\mathfrak{F}_{n'}^*$, then $L(s,\pi\times\pi')$ is holomorphic away from a possible pole at $s=1$, which occurs if and only if $\pi'=\tilde{\pi}$.  If $\pi'\in\mathfrak{F}_1^*$ is the trivial representation $\mathbbm{1}$, then $L(s,\pi\times\pi')=L(s,\pi)$.

Jacquet and Shalika~\cite{JacquetShalika} proved that if $\pi\in\mathfrak{F}_n$, then $L(s,\pi)\neq 0$ for $\re(s)\geq 1$.  Recently, Wattanawanichkul~\cite[Theorem~1.1]{Wattanawanichkul} (see also \cite[Appendix]{Humphries}) proved that $L(\sigma+it,\pi^*)$ has at most one zero in the region
\begin{equation}
\label{eqn:standard_ZFR_log_1}
\sigma\geq 1-\frac{1}{16(2n+3)\log(C(\pi^*)(|t|+3)^{n[F:\Q]})}.
\end{equation}
If the exceptional zero exists, then it is necessarily real and simple, and $\pi^*=\tilde{\pi}^*$. This directly generalizes classical results for Dirichlet $L$-functions.  We could have equivalently presented a zero-free interval for $L(\sigma,\pi)$, but the presentation with $L(\sigma+it,\pi^*)$ helps to clarify the properties of the exceptional zero.  In certain cases, the exceptional zero has been precluded entirely; see \cite{Banks,HoffsteinLockhart,HoffsteinRamakrishnan,Thorner_Siegel1}.

Shahidi~\cite{Shahidi} proved that if $(\pi,\pi')\in\mathfrak{F}_n\times\mathfrak{F}_{n'}$, then $L(s,\pi\times\pi')\neq 0$ for $\re(s)\geq 1$.  Recently, Wattanawanichkul~\cite[Theorems~1.2 and~1.3]{Wattanawanichkul} (see also \cite[Appendix]{Humphries} and \cite{HumphriesThorner}) proved that if
\begin{equation}
\label{eqn:standard_ZFR_conds}
L(s,\pi^*\times\pi'^*)=L(s,\tilde{\pi}^*\times\tilde{\pi}'^*)\qquad\textup{or}\qquad\pi'^*=\tilde{\pi}'^*,
\end{equation}
then $L(\sigma+it,\pi^*\times\pi'^*)$ has at most one zero in the region
\begin{equation}
\label{eqn:standard_ZFR_log}
\sigma\geq 1-\frac{1}{66(n+n')\log(C(\pi^*)C(\pi'^*)(|t|+3)^{n[F:\Q]})}.
\end{equation}
If the exceptional zero exists, then it is must be real and simple, and $L(s,\pi^*\times\pi'^*)=L(s,\tilde{\pi}^*\times\tilde{\pi}'^*)$. In certain cases, the exceptional zero in \eqref{eqn:standard_ZFR_log} can be precluded even when \eqref{eqn:standard_ZFR_conds} is not satisfied; see \cite{Luo,RamakrishnanWang,Thorner_Siegel1,Thorner_Siegel2}.  For further historical remarks, see \cite[Section~1]{HarcosThorner} and the references contained therein.

There exist zero-free regions with no exceptions that hold for all Rankin--Selberg $L$-functions, but they are weaker than \eqref{eqn:standard_ZFR_log}.  The two strongest results are as follows.  First, Brumley (\cite{Brumley}, \cite[Appendix]{Lapid}) proved that if $(\pi,\pi')\in\mathfrak{F}_n\times\mathfrak{F}_{n'}$, then for all $\epsilon>0$, there exists an effectively computable constant $\Cl[abcon]{Brumley}=\Cr{Brumley}(n,n',F,\epsilon)>0$ such that
\begin{equation}
\label{eqn:Brumley_ZFR}
L(\sigma,\pi\times\pi')\neq 0,\qquad \sigma \geq 1-\Cr{Brumley}/(C(\pi)C(\pi'))^{n+n'-1+\epsilon}.
\end{equation}
(See Zhang~\cite{Zhang} for a recent numerical improvement in the exponent of \eqref{eqn:Brumley_ZFR} in the $|\cdot|^{it}$-twist aspect.)  Second, the authors~\cite{HarcosThorner} proved that for all $\epsilon>0$, there exists an ineffective constant $\Cl[abcon]{Crelle}=\Cr{Crelle}(\pi,\pi',\epsilon)>0$ such that if $\chi\in\mathfrak{F}_1$, then
\begin{equation}
\label{eqn:ineffective}
L(\sigma,\pi\times(\pi'\otimes\chi))\neq 0,\qquad \sigma\geq 1-\Cr{Crelle}C(\chi)^{-\epsilon}.
\end{equation}
When $\pi=\pi'=\mathbbm{1}$ and $F=\Q$, \eqref{eqn:ineffective} recovers a landmark result of Siegel~\cite{Siegel} for $L$-functions of quadratic Dirichlet characters: If $\mathcal{Q}$ is the set of primitive quadratic Dirichlet characters, then for all $\epsilon>0$, there exists an {\it ineffective} constant $\Cl[abcon]{Siegel}=\Cr{Siegel}(\epsilon)>0$ such that
\begin{equation}
\label{eqn:Siegel}
L(1,\chi)\geq \Cr{Siegel}q_{\chi}^{-\epsilon},\qquad\chi\in\mathcal{Q}.
\end{equation}

Tatuzawa~\cite{Tatuzawa} refined Siegel's ineffective result \eqref{eqn:Siegel} by proving that there exists an {\it effectively computable} constant $\Cl[abcon]{Tatuzawa}=\Cr{Tatuzawa}(\epsilon)>0$ such that
\begin{equation}
\label{eqn:Tatuzawa}
\text{$L(1,\chi)\geq \Cr{Tatuzawa} q_{\chi}^{-\epsilon}$ holds for all $\chi\in\mathcal{Q}$ with one possible exception.}
\end{equation}
In fact \cite[Theorem~1]{Tatuzawa} states that $\Cr{Tatuzawa}=\epsilon/10$ is permissible.  This narrowly misses an effective version of \eqref{eqn:Siegel}, which, for example, would significantly impact the distribution of primes in arithmetic progressions and the computation of class numbers of imaginary quadratic fields.  Using classical techniques in the theory of Dirichlet $L$-functions, one can prove that \eqref{eqn:Tatuzawa} is equivalent to the following statement:
\begin{equation}
\label{eqn:Tatuzawa_ZFR}
\parbox{0.85\linewidth}{For all $\epsilon>0$, there exists an effectively
computable constant
$\Cl[abcon]{Tatuzawa1}=\Cr{Tatuzawa1}(\epsilon)>0$\\and
$\chi_{\epsilon}\in\mathcal{Q}$ such that if
$\chi\in\mathcal{Q}-\{\chi_{\epsilon}\}$ and $\sigma\geq
1-\Cr{Tatuzawa1}q_{\chi}^{-\epsilon}$, then $L(\sigma,\chi)\neq 0$.}
\end{equation}

Since \eqref{eqn:ineffective} is true, one might hope to prove a version of \eqref{eqn:Tatuzawa} for the $\GL_1$-twists of $L(s,\pi)$ or $L(s,\pi\times\pi')$.  When $F=\Q$, Ichihara and Matsumoto~\cite{IM} proved that if $L(s)$ is an $L$-function satisfying certain restrictive hypotheses and $\epsilon>0$, then there exists an effectively computable constant $\Cl[abcon]{IM}=\Cr{IM}(L,\epsilon)>0$ and a character $\psi=\psi_{L,\epsilon}\in\mathcal{Q}$ such that if $\chi\in\mathcal{Q}-\{\psi\}$, then the twisted $L$-function $L_{\chi}(s)$ satisfies $|L_{\chi}(1)|\geq \Cr{IM}q_{\chi}^{-\epsilon}$. We highlight several desiderata that are not addressed in \cite{IM}.  First, \cite{IM} assumes that $L_{\chi}(s)$ restricted to $|\im(s)|\leq 1$ has a classical zero-free region.  As described above, this hypothesis remains a significant open problem in the context of Rankin--Selberg $L$-functions. Second, \cite{IM} does not determine the effective dependence of $\Cr{IM}$ on the original $L$-function $L(s)$.  Third, like all preceding manifestations of Tatuzawa's ideas, only exceptional behavior of quadratic twists of $L(s)$ are considered.  It is important to have a version of Tatuzawa's theorem for all $\GL_1$-twists of $L(s)$, not just those coming from $\mathcal{Q}$. For example, even if $L(s)$ is not self-dual, there might exist $\chi\in\mathfrak{F}_1-\mathcal{Q}$ such that $L_{\chi}(s)$ is self-dual (and hence might have an exceptional zero). Furthermore, finite order idele class characters only form a small portion of $\mathfrak{F}_1$. In particular, by handling characters of the form $|\cdot|^{it}$, one might be able to use Tatuzawa's ideas to establish an {\it effective} Siegel-type zero-free region apart from at most one exception.  This would be a significant step towards improving Brumley's narrow zero-free region in \eqref{eqn:Brumley_ZFR}.

Let $(\pi,\pi')\in\mathfrak{F}_n\times\mathfrak{F}_{n'}$.  In this paper, we prove a version of \eqref{eqn:Tatuzawa_ZFR} that holds unconditionally for all $\GL_1$-twists of $L(s,\pi)$ and $L(s,\pi\times\pi')$.

\begin{theorem}
\label{thm:Tatuzawa}
Let $(\pi,\pi',\chi)\in\mathfrak{F}_n\times\mathfrak{F}_{n'}\times\mathfrak{F}_1$ and $\epsilon>0$. There exist an effectively computable constant $\Cl[abcon]{ZFR}=\Cr{ZFR}(n,n',[F:\Q],\epsilon)>0$ and a character $\psi=\psi_{\pi,\pi',\epsilon}\in\mathfrak{F}_1$ such that if $L(s,\pi\times(\pi'\otimes\chi))$ differs from $L(s,\pi\times(\pi'\otimes\psi))$, then
\begin{equation}
\label{ZFRfinal}
L(\sigma,\pi\times(\pi'\otimes\chi))\neq 0,\qquad \sigma\geq 1-\Cr{ZFR}(C(\pi)C(\pi')C(\chi))^{-\epsilon}.
\end{equation}
Moreover, $L(s,\pi\times(\pi'\otimes\psi))$ has at most one zero (necessarily simple) in the interval $\sigma\geq 1-\Cr{ZFR}(C(\pi)C(\pi')C(\psi))^{-\epsilon}$.
\end{theorem}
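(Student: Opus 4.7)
The plan is to adapt Tatuzawa's original argument \cite{Tatuzawa} to the Rankin--Selberg setting, using as the decisive input the new standard zero-free region for $L(s,\pi\times\tilde{\pi}\otimes\chi)$ announced in the abstract. The character $\psi$ is chosen to witness the ``worst'' exceptional zero, so that any other $\chi$ producing a distinct $L$-function is then forced away from the region \eqref{ZFRfinal}.

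Let $\mathcal{E}$ denote the set of $\chi\in\mathfrak{F}_1$ for which $L(s,\pi\times(\pi'\otimes\chi))$ has a real zero in $\sigma\geq 1-\Cr{ZFR}(C(\pi)C(\pi')C(\chi))^{-\epsilon}$, where $\Cr{ZFR}=\Cr{ZFR}(n,n',[F:\Q],\epsilon)>0$ is a small constant to be determined. If $\mathcal{E}=\emptyset$, take $\psi=\mathbbm{1}$ and we are done. Otherwise, let $\psi\in\mathcal{E}$ maximise the corresponding real zero $\beta_\psi$; the standard logarithmic zero-free region ensures $\beta_\psi$ is strictly bounded away from $1$, so the supremum is attained. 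It remains to prove that whenever $\chi\in\mathfrak{F}_1$ satisfies $L(s,\pi\times(\pi'\otimes\chi))\neq L(s,\pi\times(\pi'\otimes\psi))$, the function $L(s,\pi\times(\pi'\otimes\chi))$ has no zero in the region prescribed by \eqref{ZFRfinal}.

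For contradiction, suppose such a $\chi$ has an exceptional zero $\beta_\chi$. Form the self-dual isobaric sum
\[
\Sigma=\pi\boxplus\tilde{\pi}\boxplus(\pi'\otimes\psi)\boxplus(\tilde{\pi'}\otimes\bar\psi)\boxplus(\pi'\otimes\chi)\boxplus(\tilde{\pi'}\otimes\bar\chi)
\]
and consider $\mathcal{L}(s)=L(s,\Sigma\times\Sigma)$. Since $\tilde\Sigma=\Sigma$, Rankin--Selberg positivity yields $\mathcal{L}(s)=\sum_n a_n n^{-s}$ with $a_n\geq 0$ and $a_1=1$; the function $\mathcal{L}(s)$ has a pole at $s=1$ of order at least $6$ (one contribution from each pair $(\sigma,\tilde\sigma)$ in the decomposition of $\Sigma$); and its $36$-fold factorization into Rankin--Selberg $L$-functions contains $L(s,\pi\times(\pi'\otimes\psi))^2$ and $L(s,\pi\times(\pi'\otimes\chi))^2$, together with the conjugate factors $L(s,\tilde\pi\times(\tilde{\pi'}\otimes\bar\psi))^2$ and $L(s,\tilde\pi\times(\tilde{\pi'}\otimes\bar\chi))^2$ that share the same real zeros by conjugation symmetry. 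Hence $\mathcal{L}(s)$ has a zero of order at least $4$ at each of $\beta_\psi$ and $\beta_\chi$. The new standard zero-free region, applied to every remaining factor of $\mathcal{L}(s)$, confines all other zeros to a standard logarithmic neighbourhood of $s=1$, except possibly for a bounded list of further exceptional zeros tied rigidly to conditions of the form $\pi\otimes\eta^2=\pi$ on individual twists.

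The final step is a Tatuzawa-style comparison at a real parameter $\sigma_0$ chosen slightly to the right of $\max(\beta_\psi,\beta_\chi)$. A Mellin-smoothed positivity bound of the shape $\sum_n a_n n^{-\sigma_0}\Phi(n/X)\geq 1$ furnishes a lower bound for $\mathcal{L}$, while shifting the contour past the pole at $s=1$ and the quadruple zeros at $\beta_\psi,\beta_\chi$ and invoking standard convexity estimates for the remaining factors yields an upper bound of the form
\[
\mathrm{(const)}\cdot(\sigma_0-\beta_\psi)^4(\sigma_0-\beta_\chi)^4(1-\sigma_0)^{-6}\cdot C^{A(1-\sigma_0)},
\]
where $C=C(\pi)C(\pi')C(\psi)C(\chi)$ and $A=A(n,n',[F:\Q])$. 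Since the combined zero multiplicity $4+4=8$ strictly exceeds the pole order $6$, optimising $X$ and $\sigma_0$ and then exploiting the maximality of $\beta_\psi$ forces $1-\beta_\chi\gg(C(\pi)C(\pi')C(\chi))^{-\epsilon}$, contradicting the assumption on $\chi$. The principal obstacle is establishing the new standard zero-free region itself, in particular the precise characterisation of when the rigidity condition $\pi\otimes\eta^2=\pi$ can manifest extra exceptional zeros across all the Rankin--Selberg factors of $\mathcal{L}(s)$; once that input is in hand, the Tatuzawa comparison runs along classical lines, with careful bookkeeping of the analytic conductors of the $36$ factors of $\mathcal{L}(s)$ needed to secure the final exponent $\epsilon$ in \eqref{ZFRfinal}.
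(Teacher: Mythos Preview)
Your proposal has a structural gap at the decisive step. The ``$8>6$'' zero-versus-pole comparison you invoke is a Goldfeld--Hoffstein--Lieman count, and any such argument only controls zeros lying within a window of width $\asymp 1/\log C(\Sigma\times\tilde\Sigma)$ of $s=1$. But $C(\Sigma\times\tilde\Sigma)$ depends on both $C(\psi)$ and $C(\chi)$, so when $C(\chi)$ is much larger than $C(\psi)$ the zero $\beta_\psi$---which you only know lies within $(C(\pi)C(\pi')C(\psi))^{-\epsilon}$ of $1$---can sit well outside that GHL window, and the counting argument collapses. This conductor asymmetry is precisely what separates a Tatuzawa-strength statement from a Page/GHL-strength one; no amount of bookkeeping on the $36$ factors repairs it. There are secondary problems as well: the supremum defining $\psi$ need not be attained, since for general Rankin--Selberg $L$-functions one has only the polynomial bound \eqref{eqn:Brumley_ZFR} and \emph{not} a logarithmic zero-free region (the latter is known only under \eqref{eqn:standard_ZFR_conds}); and \cref{thm:standard_region} applies only to factors of the shape $L(s,\pi\times(\tilde\pi\otimes\eta))$, not to the cross factors $L(s,\pi\times(\pi'\otimes\eta))$ or $L(s,(\pi'\otimes\psi)\times(\tilde\pi'\otimes\bar\chi))$ appearing in your $\mathcal L$.

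The paper resolves the asymmetry differently. Rather than maximising $\beta_\psi$, it fixes a character $\lambda$ of \emph{minimal conductor} among those for which $L(s,\pi\times(\pi'\otimes\lambda))$ has a real zero in the \emph{fixed} interval $[1-\epsilon'/16,1)$. The engine is then a Goldfeld-type repulsion (\cref{prop:P1}): a zero of the $\lambda$-twist anywhere in that fixed interval forces the $\chi$-twist to be zero-free on $[1-cQ^{-\epsilon},1)$, where $Q$ is governed by the \emph{larger} conductor $C(\chi)$ via \eqref{eqn:Rdef} and \eqref{eqn:Qbound}. The minimality of $C(\lambda)$ disposes of the remaining characters with $C(\chi)<C(\lambda)$ trivially. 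A GHL step (\cref{lem:GHL2}) enters only at the very end, to show that the $\lambda$-twist itself has at most one zero in its own region \eqref{niceregion2}; there no conductor asymmetry arises because only $\lambda$ is involved.
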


\begin{remark}
There might exist several characters $\chi\in\mathfrak{F}_1$ resulting in the same twisted $L$-function $L(s,\pi\times(\pi'\otimes\chi))$, which is why we consider exceptions at the level of twisted $L$-functions rather than characters.
\end{remark}

\begin{remark}
\cref{thm:Tatuzawa} strengthens \eqref{eqn:ineffective} considerably.  It is new even when $\pi'=\mathbbm{1}$, that is, for the $\GL_1$-twists of the standard $L$-function $L(s,\pi)$.
\end{remark}

By specializing \cref{thm:Tatuzawa} to the shift characters $\chi=|\cdot|^{it}\in\mathfrak{F}_1$ we obtain the following corollary, which significantly improves upon \eqref{eqn:Brumley_ZFR} apart from at most one exceptional zero.

\begin{corollary}
\label{cor:Tatuzawa}
Let $(\pi,\pi')\in\mathfrak{F}_n\times\mathfrak{F}_{n'}$ and $\epsilon>0$. There exists an effectively computable constant $\Cl[abcon]{ZFR3}=\Cr{ZFR3}(n,n',[F:\Q],\epsilon)>0$ such that $L(\sigma+it,\pi\times\pi')$ has at most one zero (necessarily simple) in the region $\sigma\geq 1-\Cr{ZFR3}(C(\pi)C(\pi')(|t|+1))^{-\epsilon}$.
\end{corollary}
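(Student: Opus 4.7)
The plan is to deduce \cref{cor:Tatuzawa} directly from \cref{thm:Tatuzawa} by specializing the $\GL(1)$-twist to the shift characters $\chi_t := |\cdot|^{it} \in \mathfrak{F}_1$. The starting point is the identity
\[
L(s, \pi \times (\pi' \otimes \chi_t)) = L(s + it, \pi \times \pi'),
\]
together with the standard analytic-conductor bound $C(\chi_t) \ll_F (|t|+1)^{[F:\Q]}$. These allow us to transfer zero-free information for $L(s, \pi \times (\pi' \otimes \chi_t))$ on the real axis into zero-free information for $L(s, \pi \times \pi')$ at height $t$, while absorbing the polynomial dependence of $C(\chi_t)$ on $|t|+1$ into the exponent.

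First I would apply \cref{thm:Tatuzawa} with parameters $(\pi, \pi', \epsilon')$ for $\epsilon' := \epsilon/(2[F:\Q])$, producing an exceptional character $\psi \in \mathfrak{F}_1$ depending only on $(\pi, \pi', \epsilon)$ together with an effective constant. Given any zero $\rho = \beta + i\gamma$ of $L(s, \pi \times \pi')$ lying in the region of \cref{cor:Tatuzawa} for $\Cr{ZFR3}$ chosen sufficiently small, the translation identity places $\beta$ as a real zero of $L(s, \pi \times (\pi' \otimes \chi_\gamma))$ in the interval to which \cref{thm:Tatuzawa} applies with $\chi = \chi_\gamma$. The theorem then forces the exceptional equality
\[
L(s, \pi \times (\pi' \otimes \chi_\gamma)) = L(s, \pi \times (\pi' \otimes \psi))
\]
to hold, and it identifies $\rho$ as the unique simple zero of this $L$-function in the relevant interval.

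The crucial step is to show that the displayed equality can hold for at most one $\gamma \in \R$. If it held for $\gamma_1 \neq \gamma_2$, then $L(s + i\gamma_1, \pi \times \pi') = L(s + i\gamma_2, \pi \times \pi')$ identically, making $L(s, \pi \times \pi')$ periodic with nonzero imaginary period; comparing Dirichlet coefficients forces $\N(\mathfrak{a})^{i(\gamma_2 - \gamma_1)} = 1$ for every integral ideal $\mathfrak{a}$ of $F$ whose Rankin--Selberg coefficient is nonzero, which is impossible since those norms hit infinitely many rational primes. Combining with the previous step, every zero of $L(s, \pi \times \pi')$ in the region of \cref{cor:Tatuzawa} must lie on a single horizontal line $\im(s) = \gamma_0$ (when such a line exists; otherwise the region is zero-free), at which height there is at most one simple zero. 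This proves the corollary. The main obstacle is precisely the uniqueness-of-$\gamma_0$ argument above, though it is standard via the uniqueness of Dirichlet series.
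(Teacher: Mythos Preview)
Your approach—specializing \cref{thm:Tatuzawa} to the shift characters $\chi=|\cdot|^{it}$—is exactly the paper's, which supplies no argument beyond that one line. You correctly isolate the crux: ruling out two distinct heights $\gamma_1\neq\gamma_2$ both satisfying $L(s+i\gamma_j,\pi\times\pi')=L(s,\pi\times(\pi'\otimes\psi))$, and your periodicity argument via uniqueness of Dirichlet series is sound. The one-line justification that the relevant ideal norms ``hit infinitely many rational primes'' is slightly loose, since $\lambda_{\pi\times\pi'}(\kp)$ can vanish at degree-one primes; pass to prime powers instead, using that the local Euler factor at any unramified $\kp$ is nontrivial (all Satake parameters are nonzero), hence some $\lambda_{\pi\times\pi'}(\kp^m)\neq 0$, which forces $T\log\N\kp\in 2\pi\Q$ and then $T=0$ by incommensurability of logarithms of distinct rational primes.

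There is one genuine subtlety you pass over. Once all offending zeros lie at a single height $\gamma_0$, you invoke the ``Moreover'' clause of \cref{thm:Tatuzawa} to conclude there is at most one simple zero; but as stated, that clause guarantees this only in the interval $\sigma\geq 1-\Cr{ZFR}(C(\pi)C(\pi')C(\psi))^{-\epsilon'}$, phrased with $C(\psi)$ rather than $C(\chi_{\gamma_0})$, and nothing in the theorem statement relates the two in a way that a constant depending only on $(n,n',[F:\Q],\epsilon)$ can absorb. The fix is immediate once one looks at the \emph{proof} of \cref{thm:Tatuzawa} (final paragraph of Section~5), which actually establishes the stronger assertion: the exceptional twist has at most one simple zero in the interval \eqref{niceinterval2}, and that interval is phrased in terms of $C(\chi)$ for whichever $\chi$ produces the exceptional $L$-function. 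Citing that closes your argument.
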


\begin{remark}
If the exceptional zero $\beta+i\gamma$ exists, then the much coarser results of Brumley~\cite{Brumley,Lapid} and Zhang~\cite{Zhang} provide the only nontrivial effective upper bound for $\beta$.  This is similar to the best known effective zero-free region for $L$-functions of primitive quadratic Dirichlet characters.
\end{remark}

Our proof deviates substantially from those in \cite{IM,Tatuzawa}.  For example, these works rely on Estermann's approach to a lower bound for $L(1,\chi)$ \cite{Estermann} (see also \cite[\S20]{Davenport}) combined with assumed zero-free regions.  Our proofs rely on Goldfeld's approach to a lower bound for $L(1,\chi)$ \cite{Goldfeld} combined with other techniques to avoid assuming any unproven zero-free regions. In particular, we make a systematic use of the ideas of Goldfeld, Hoffstein, Lieman, Lockhart, and Ramakrishan, originally developed in \cite{HoffsteinLockhart,HoffsteinRamakrishnan}.

We also substantially depart from our earlier work \cite{HarcosThorner}, which relied on the following strategy.  Let $\mathfrak{F}_1^{(j)}=\{\chi\in\mathfrak{F}_1\colon {\chi^*}^j=\mathbbm{1}\}$.  First, let $\chi\in\mathfrak{F}_1^{(1)}$.  We establish the zero-free region \eqref{eqn:ineffective} assuming that there exists $\lambda\in\mathfrak{F}_1^{(1)}$ such that $L(s,\pi\times(\pi'\otimes\lambda))$ has a real zero close to $s=1$ and the $L$-functions
\begin{equation}
\label{eqn:three_twists}
L(s,\pi\times(\pi'\otimes\lambda)),\qquad L(s,\pi\times(\pi'\otimes\chi)),\qquad L(s,\pi\times(\pi'\otimes\chi^2\bar{\lambda}))
\end{equation}
are entire.  If one of the $L$-functions in \eqref{eqn:three_twists} has a pole, then $L(s,\pi\times(\pi'\otimes\lambda))$ has a pole, which allows us to conclude \eqref{eqn:ineffective} for all $\chi\in\mathfrak{F}_1^{(1)}$.  Second, let $\chi\in\mathfrak{F}_1^{(2)}$.  We establish \eqref{eqn:ineffective} assuming that there exists $\lambda\in\mathfrak{F}_1^{(2)}$ such that $L(s,\pi\times(\pi'\otimes\lambda))$ has a real zero close to $s=1$ and the $L$-functions in \eqref{eqn:three_twists} are entire.  If one of the $L$-functions in \eqref{eqn:three_twists} has a pole, then either $L(s,\pi\times(\pi'\otimes\lambda))$ has a pole or $\chi$ lies in one of $O_{\pi,\pi',\epsilon}(1)$ cosets of the group $\mathfrak{F}_1^{(1)}$.  Therefore, we can conclude \eqref{eqn:ineffective} for all $\chi\in\mathfrak{F}_1^{(2)}$.  Third, let $\chi\in\mathfrak{F}_1$.  We establish \eqref{eqn:ineffective} assuming that there exists $\lambda\in\mathfrak{F}_1$ such that $L(s,\pi\times(\pi'\otimes\lambda))$ has a real zero close to $s=1$ and the $L$-functions in \eqref{eqn:three_twists} are entire.  If one of the $L$-functions in \eqref{eqn:three_twists} has a pole, then either $L(s,\pi\times(\pi'\otimes\lambda))$ has a pole or $\chi$ lies in one of $O_{\pi,\pi',\epsilon}(1)$ cosets of the group $\mathfrak{F}_1^{(2)}$.  Therefore, we can conclude \eqref{eqn:ineffective} for all $\chi\in\mathfrak{F}_1$.

Here, we eschew this incremental strategy in favor of a more direct approach, one where the role of the ``semi-exceptional'' twist $L(s,\pi\times(\pi'\otimes\lambda))$ is to pin down the ``exceptional'' twist $L(s,\pi\times(\pi'\otimes\psi))$.  Namely, we prove that if $L(s,\pi\times(\pi'\otimes\lambda))$ has a real zero close to $s=1$ and we restrict to the situation where the $L$-functions in \eqref{eqn:three_twists} are entire, then the possible violations of \eqref{ZFRfinal} correspond bijectively to zeros of $L(s,\pi\times(\pi'\otimes\lambda))$ very close to $s=1$.  However, there is at most one such zero of $L(s,\pi\times(\pi'\otimes\lambda))$, verifying the bulk of \cref{thm:Tatuzawa}. In fact, we eliminate the third $L$-function from the list \eqref{eqn:three_twists} by the following simple yet crucial observation:  If $L(s,\pi\times(\pi'\otimes\chi^2\bar{\lambda}))$ has a pole, then there exists a unique $t\in\R$ such that $\pi'\otimes\chi^2\bar{\lambda}=\tilde{\pi}\otimes|\cdot|^{it}$.  In order to leverage this observation, we require a refinement of the main nonnegativity argument that leads to the zero-free regions in \eqref{eqn:standard_ZFR_log_1} and \eqref{eqn:standard_ZFR_log}.  This is \cref{lem:GHLnew} below, which actually serves multiple purposes in the paper.  As part of our proof of \cref{thm:Tatuzawa}, we use \cref{lem:GHLnew} to establish a new zero-free region for all $\GL_1$-twists of $L(s,\pi\times\tilde{\pi})$.

\begin{theorem}
\label{thm:standard_region}
If $(\pi,\chi)\in\mathfrak{F}_n\times\mathfrak{F}_1^*$, then $L(\sigma+it,\pi\times(\tilde\pi\otimes\chi))$ has at most one zero (necessarily real and simple) in the region
\begin{equation}
\label{eqn:ZFR_standard_region}
\sigma\geq 1-\frac{1}{903\log(C(\pi)^{2n}C(\chi)^{n^2}(|t|+3)^{n^2[F:\Q]})}.
\end{equation}
If the exceptional zero exists, then $\pi\otimes\chi^2=\pi$.  Moreover, if $(\pi,\chi_1,\chi_2)\in\mathfrak{F}_n\times\mathfrak{F}_1^*\times\mathfrak{F}_1^*$ satisfies $\pi\otimes\chi_1\neq\pi\otimes\chi_2$, then $L(s,\pi\times(\tilde{\pi}\otimes\chi_1))L(s,\pi\times(\tilde{\pi}\otimes\chi_2))$ has at most one zero (counted with multiplicity) in the interval
\begin{equation}
\label{twobetasinterval}
\sigma\geq 1-\frac{1}{158\log(C(\pi)^{4n}C(\chi_1)^{n^2}C(\chi_2)^{n^2})}.
\end{equation}
\end{theorem}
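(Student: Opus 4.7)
My plan is to deduce both assertions by applying \cref{lem:GHLnew} to Rankin--Selberg $L$-functions of isobaric auxiliary representations, exploiting the positivity of the associated Dirichlet coefficients (Jacquet--Shalika) against the pole and zero contributions in the explicit formula. The unifying mechanism is that a zero of $L(s, \pi \times (\tilde\pi \otimes \chi))$ is paired, by complex-conjugating the defining Dirichlet series, with a zero of $L(s, \pi \times (\tilde\pi \otimes \bar\chi))$ at the conjugate point, and both of these $L$-functions appear as factors of a suitable isobaric square; thus each such zero contributes weight $2$ to the auxiliary.

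For the second assertion I form the isobaric sum $\Pi := \pi \boxplus (\pi \otimes \chi_1) \boxplus (\pi \otimes \chi_2)$, an isobaric unitary automorphic representation of $\GL_{3n}(\A_F)$. The hypothesis $\pi \otimes \chi_1 \neq \pi \otimes \chi_2$ ensures the summands are pairwise inequivalent (after the harmless reduction to $\pi \otimes \chi_i \neq \pi$, in which case the corresponding factor equals $L(s, \pi \times \tilde\pi)$ and has a pole, not a zero, near $s = 1$). Then
\begin{align*}
L(s, \Pi \times \tilde\Pi) &= L(s, \pi \times \tilde\pi)^3 \cdot \prod_{i=1}^{2} L(s, \pi \times (\tilde\pi \otimes \chi_i)) L(s, \pi \times (\tilde\pi \otimes \bar\chi_i)) \\
&\quad \cdot L(s, \pi \times (\tilde\pi \otimes \chi_1 \bar\chi_2)) L(s, \pi \times (\tilde\pi \otimes \bar\chi_1 \chi_2))
\end{align*}
carries a triple pole at $s = 1$, nonnegative Dirichlet coefficients, and analytic conductor controlled by $C(\pi)^{4n} C(\chi_1)^{n^2} C(\chi_2)^{n^2}$. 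Two real zeros of the product $L(s, \pi \times (\tilde\pi \otimes \chi_1)) L(s, \pi \times (\tilde\pi \otimes \chi_2))$ in \eqref{twobetasinterval} would accumulate four units of zero weight against only three units of pole weight, an imbalance ruled out by \cref{lem:GHLnew} once the constant $158$ is tracked.

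For the first assertion I use the lighter auxiliary $\Pi := \pi \boxplus (\pi \otimes \chi|\cdot|^{it})$ of $\GL_{2n}(\A_F)$, for which
\[
L(s, \Pi \times \tilde\Pi) = L(s, \pi \times \tilde\pi)^2 \cdot L(s + it, \pi \times (\tilde\pi \otimes \chi)) \cdot L(s - it, \pi \times (\tilde\pi \otimes \bar\chi))
\]
has a double pole at $s = 1$, nonnegative coefficients, and analytic conductor controlled by $C(\pi)^{2n} C(\chi)^{n^2} (|t| + 3)^{n^2 [F : \Q]}$. A zero of $L(\sigma + it, \pi \times (\tilde\pi \otimes \chi))$ at $\sigma_0$ produces a double zero of $L(s, \Pi \times \tilde\Pi)$ at $s = \sigma_0$; a second such zero, a non-real zero (whose conjugate partner lands at a distinct point in the right factor), or a multiple zero would push the total zero weight past $2$, contradicting \cref{lem:GHLnew} against the double pole. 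This forces at most one zero, necessarily real and simple, and produces the constant $903$.

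Finally, to obtain $\pi \otimes \chi^2 = \pi$ whenever the exceptional zero exists, I bootstrap from the just-proved second assertion with $\chi_1 = \chi$ and $\chi_2 = \bar\chi$: its hypothesis $\pi \otimes \chi_1 \neq \pi \otimes \chi_2$ reads $\pi \otimes \chi^2 \neq \pi$. If the exceptional real zero $\beta$ existed while $\pi \otimes \chi^2 \neq \pi$, then $\beta$ would simultaneously be a zero of $L(s, \pi \times (\tilde\pi \otimes \chi))$ and of $L(s, \pi \times (\tilde\pi \otimes \bar\chi))$, contributing multiplicity $2$ to the product in \eqref{twobetasinterval}; a direct comparison of logarithms shows \eqref{eqn:ZFR_standard_region} specialized to $t = 0$ lies inside \eqref{twobetasinterval}, so this contradicts the second assertion. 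The principal technical obstacle lies in the quantitative form of \cref{lem:GHLnew}: accommodating the different pole orders, shifts, and character twists that arise across the two isobaric constructions while delivering the explicit constants $903$ and $158$ requires careful bookkeeping of the archimedean and ramified contributions to the explicit formula.
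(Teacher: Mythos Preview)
Your treatment of the second assertion via the three-summand isobaric sum $\Pi=\pi\boxplus(\pi\otimes\chi_1)\boxplus(\pi\otimes\chi_2)$ matches the paper's, but your ``harmless reduction'' to $\pi\otimes\chi_i\neq\pi$ is not harmless: when $\pi\otimes\chi_2=\pi$ the factor $L(s,\pi\times\tilde\pi)$ has a pole at $s=1$ \emph{and} may have an exceptional real zero in the interval, so you cannot discard this case.  The paper treats it separately, observing that then $L(s,\Pi\times\tilde\Pi)=L(s,\pi\times\tilde\pi)^5 L(s,\pi\times(\tilde\pi\otimes\chi_1))^2 L(s,\pi\times(\tilde\pi\otimes\bar\chi_1))^2$ has $r=5$, while two real zeros of the product force at least eight zeros of the auxiliary, still a contradiction against \cref{lem:GHLnew}.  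This is a patchable omission.

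The first assertion is where your argument has a genuine gap.  With the two-summand auxiliary $\Pi=\pi\boxplus(\pi\otimes\chi|\cdot|^{it})$ you have $r=2$, and a zero of $L(s,\pi\times(\tilde\pi\otimes\chi))$ at $\sigma_0+it$ produces exactly a double zero of $L(s,\Pi\times\tilde\Pi)$ at $s=\sigma_0$ --- but this is \emph{consistent} with \cref{lem:GHLnew}, not contradictory.  Your claim that ``a non-real zero (whose conjugate partner lands at a distinct point in the right factor)\dots would push the total zero weight past $2$'' is false: the conjugate zero $\sigma_0-it$ of $L(s,\pi\times(\tilde\pi\otimes\bar\chi))$ lands at $s=\sigma_0$ in the factor $L(s-it,\pi\times(\tilde\pi\otimes\bar\chi))$, the \emph{same} point.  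So a single zero at height $t$, real or not, is never excluded by your auxiliary.  Your bootstrap via the second assertion with $(\chi_1,\chi_2)=(\chi,\bar\chi)$ only addresses \emph{real} zeros, since \eqref{twobetasinterval} is an interval on the real axis; it cannot rule out non-real zeros when $\pi\otimes\chi^2\neq\pi$.  The paper closes this gap by using instead the three-summand $\Pi''=\pi\boxplus(\pi\otimes\chi|\cdot|^{it})\boxplus(\pi\otimes\bar\chi|\cdot|^{-it})$, whose Rankin--Selberg square contains $L(s+it,\pi\times(\tilde\pi\otimes\chi))^2 L(s-it,\pi\times(\tilde\pi\otimes\bar\chi))^2$, so that one zero at $\sigma_0+it$ yields four zeros of the auxiliary at $\sigma_0$ against $r=3$ (when $\pi\otimes\chi^2\neq\pi$), forcing zero-freeness at every height $t$.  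The cases $\pi\otimes\chi^2=\pi$ with $\pi\otimes\chi\neq\pi$ and $\pi\otimes\chi=\pi$ require further case analysis (extra poles appear at $1\pm it$ and $1\pm 2it$), which the paper handles by combining the same $\Pi''$ for large $|t|$ with a separate small-$|t|$ argument via \cref{lem:GHL2}.
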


\begin{remark}
\cref{thm:standard_region} is completely unconditional.  It was proved (with inexplicit constants) in the special cases where $\chi=\mathbbm{1}$ \cite[Theorem~1.1]{HumphriesThorner} or $F=\Q$ and the conductor of $\chi$ is coprime to the conductor of $\pi$ \cite[Theorem~8.2]{HumphriesThorner2}.
\end{remark}

\begin{remark}
\label{conductorremark}
For the proof of \cref{thm:Tatuzawa}, we only need the first part of \cref{thm:standard_region}, and only when $\pi\otimes\chi^2\neq\pi$.  In this special case, our proof reveals that in \eqref{eqn:ZFR_standard_region}, we can replace $C(\chi)^{n^2}(|t|+3)^{n^2[F:\Q]}$ by the smaller, more canonical quantity $C(it,\chi)^{n^2}$ (see \eqref{eqn:AC_def_standard} below).
\end{remark}

\section{Properties of $L$-functions}

Let $F$ be a number field with adele ring $\A_F$. Let $D_F$ be the absolute discriminant of $F$, $\cO_F$ the ring of integers of $F$, and $\N=\N_{F/\Q}$ the norm defined on nonzero ideals $\ka$ of $\cO_F$ by $\N\ka=|\cO_F/\ka|$. For a place $v$ of $F$, let $v\mid\infty$ (resp. $v\nmid\infty$) denote that $v$ is archimedean (resp. non-archimedean), and let $F_v$ be the corresponding completion of $F$. Each $v\nmid\infty$ corresponds with a prime ideal $\kp$ of $\cO_F$.

\subsection{Standard $L$-functions}

Let $\mathfrak{F}_n$ be the set of cuspidal automorphic representations $\pi$ of $\GL_n(\A_F)$ whose central character $\omega_\pi$ is unitary, and let $\mathfrak{F}_n^*$ be the subset of those $\pi$'s for which $\omega_\pi$ is trivial on the diagonally embedded positive reals. Every $\pi\in\mathfrak{F}_n$ can be written uniquely as
\begin{equation}
\label{eqn:pidecomp}
\pi=\pi^*\otimes|\cdot|^{it_{\pi}},\qquad \pi^*\in\mathfrak{F}_n^*,\quad t_{\pi}\in\R.
\end{equation}

If $\pi\in\mathfrak{F}_n^*$, then for each place $v$, there exists an irreducible admissible representation $\pi_v$ of $\GL_n(F_v)$, with $\pi_v$ ramified for at most finitely many $v$, such that $\pi$ is a restricted tensor product $\otimes_v \pi_v$. When $v\nmid\infty$ and $\kp$ corresponds with $v$, then we write $\pi_v$ and $\pi_{\kp}$ interchangeably. For each prime ideal $\kp$, there exist $n$ Satake parameters $\alpha_{1,\pi}(\kp),\dotsc,\alpha_{n,\pi}(\kp)\in\CC$ such that the standard $L$-function $L(s,\pi)$ of $\pi$ is the absolutely convergent Euler product
\[
L(s,\pi)=\prod_{\kp}L(s,\pi_{\kp}) = \prod_{\kp}\prod_{j=1}^n \frac{1}{1-\alpha_{j,\pi}(\kp)\N\kp^{-s}},\qquad\Re(s)>1.
\]
For $v\mid \infty$, define
\[
\Gamma_v(s)=\begin{cases}
\pi^{-s/2}\Gamma(s/2)&\mbox{if $F_v=\R$,}\\
2(2\pi)^{-s}\Gamma(s)&\mbox{if $F_v=\CC$.}
\end{cases}
\]
There exist $n$ Langlands parameters $\mu_{1,\pi}(v),\dotsc,\mu_{n,\pi}(v)\in\CC$ such that
\begin{equation}\label{eq:Lpiv}
L(s,\pi_{v})=\prod_{j=1}^n \Gamma_v(s+\mu_{j,\pi}(v)).
\end{equation}

Let $\kq_{\pi}$ denote the conductor of $\pi$, and let $\mathbbm{1}\in\mathfrak{F}_1^*$ denote the trivial representation (whose $L$-function is the Dedekind zeta function $\zeta_F(s)$). Let $\delta_{\pi}=1$ if $\pi=\mathbbm{1}$, and $\delta_{\pi}=0$ otherwise. If
\[
L_{\infty}(s,\pi)=\prod_{v\mid\infty}L(s,\pi_v),
\]
then the completed $L$-function
\[
\Lambda(s,\pi)=(s(1-s))^{\delta_{\pi}}(D_F^n\N\kq_{\pi})^{s/2}L_{\infty}(s,\pi)L(s,\pi)
\]
is entire of order $1$. There exists $W(\pi)\in\CC$ of modulus $1$ such that $\Lambda(s,\pi)$ satisfies the functional equation
\[
\Lambda(s,\pi)=W(\pi)\Lambda(1-s,\tilde\pi)=W(\pi)\overline{\Lambda(1-\bar{s},\pi)}.
\]
The nontrivial zeros of $L(s,\pi)$ are the zeros of $\Lambda(s,\pi)$, and they lie in the critical strip $0<\Re(s)<1$. It is conjectured (GRH) that they actually lie on the critical line $\Re(s)=1/2$. Finally, the analytic conductor is $C(\pi)=C(0,\pi)$, where
\begin{equation}
\label{eqn:AC_def_standard}
C(it,\pi) = D_F^n\N\kq_{\pi}\prod_{v\mid\infty}\prod_{j=1}^n (|\mu_{j,\pi}(v)+it|+3)^{[F_v:\R]}.
\end{equation}

More generally, these quantities are defined for all $\pi\in\mathfrak{F}_n$.  In particular, we have that
\[
L(s,\pi)=L(s+it_{\pi},\pi^*),\qquad C(it,\pi)=C(it+it_{\pi},\pi^*).
\]
We end this subsection by noting that $t_\pi$ can be expressed as a weighted average of the archimedean Langlands parameters of $\pi$. The proof is based on the Langlands classification of the admissible dual of $\GL_n$ over an archimedean local field, and the inductive nature of the corresponding local $L$-functions. We start with a general observation about central characters over an archimedean local field.

\begin{lemma}\label{lem:rhochar}
Let $K\in\{\R,\CC\}$, and let $\rho$ be a generic irreducible admissible representation of $\GL_n(K)$ with central character $\omega_\rho$. There exist complex numbers $\mu_1,\dotsc,\mu_n\in\CC$ such that
\begin{equation}\label{eq:Lrho}
L(s,\rho)=\prod_{j=1}^n\Gamma_K(s+\mu_j)
\end{equation}
and
\begin{equation}\label{eq:wrho}
\frac{\omega_\rho(a)}{|\omega_\rho(a)|}=\prod_{j=1}^n a^{i[K:\R]\Im(\mu_j)},\qquad a>0.
\end{equation}
\end{lemma}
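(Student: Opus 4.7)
The plan is to reduce the claim to the basic archimedean building blocks via the Langlands classification and then verify both \eqref{eq:Lrho} and \eqref{eq:wrho} on those blocks. Every generic irreducible admissible representation $\rho$ of $\GL_n(K)$ is fully parabolically induced from essentially square-integrable representations $\tau_1,\dotsc,\tau_r$ of Levi factors $\GL_{n_1}(K)\times\cdots\times\GL_{n_r}(K)$ with $n_1+\dotsb+n_r=n$, and the genericity of $\rho$ guarantees that this induction is irreducible and hence equals $\rho$. Over $K=\CC$ the only essentially square-integrable representations of $\GL_m(\CC)$ are characters of $\CC^\times$ (forcing $m=1$), while over $K=\R$ each $\tau_j$ is either a character of $\R^\times$ ($n_j=1$) or a twisted discrete series of $\GL_2(\R)$ ($n_j=2$). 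The local $L$-factor and the central character are both multiplicative under such inductions: $L(s,\rho)=\prod_j L(s,\tau_j)$ and $\omega_\rho(aI_n)=\prod_j\omega_{\tau_j}(aI_{n_j})$ for $a\in K^\times$. Consequently, concatenating the Langlands parameters arising from the $\tau_j$ yields the full list $\mu_1,\dotsc,\mu_n$ realizing \eqref{eq:Lrho} for $\rho$, while \eqref{eq:wrho} for $\rho$ follows from the analogous identities for each $\tau_j$.

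I would then verify those identities by direct calculation on the three building blocks. For a character $\chi(x)=\sgn(x)^\delta|x|^{s_0}$ of $\R^\times$ with $\delta\in\{0,1\}$ and $s_0\in\CC$, the identity $L(s,\chi)=\Gamma_\R(s+s_0+\delta)$ gives $\mu=s_0+\delta$ with $\Im(\mu)=\Im(s_0)$, while $\chi(a)/|\chi(a)|=a^{i\Im(s_0)}$ for $a>0$. For a character $\chi(z)=(z/|z|)^m|z|^{2s_0}$ of $\CC^\times$ with $m\in\Z$ and $s_0\in\CC$, the identity $L(s,\chi)=\Gamma_\CC(s+s_0+|m|/2)$ gives $\mu=s_0+|m|/2$ with $\Im(\mu)=\Im(s_0)$, while $\chi(a)/|\chi(a)|=a^{2i\Im(s_0)}$ for $a>0$. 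For a twisted discrete series $D_k\otimes|\det|^{s_0}$ of $\GL_2(\R)$ with integer $k\geq 2$, the $L$-factor $\Gamma_\CC(s+s_0+(k-1)/2)=\Gamma_\R(s+s_0+(k-1)/2)\Gamma_\R(s+s_0+(k+1)/2)$ yields two Langlands parameters, each with imaginary part $\Im(s_0)$, while the central character on $\R_{>0}$ equals $a^{2s_0}$.

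In every case the discrete invariants $\delta$, $m$, and $k$ contribute only to the real parts of the Langlands parameters, so the sum of their imaginary parts matches exactly the exponent appearing in $\omega_{\tau_j}(a)/|\omega_{\tau_j}(a)|$ for $a>0$, and \eqref{eq:wrho} follows by multiplicativity. The proof involves no substantive obstacle; the only mild subtlety is treating the two fields uniformly with the conventions $\Gamma_\R(s)=\pi^{-s/2}\Gamma(s/2)$ and $\Gamma_\CC(s)=\Gamma_\R(s)\Gamma_\R(s+1)$ and matching these against the standard local $L$-factor definitions for characters and essentially discrete series. No new analytic or representation-theoretic input is needed beyond the archimedean Langlands classification itself.
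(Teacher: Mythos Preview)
Your proposal is correct and follows essentially the same approach as the paper: reduce via the archimedean Langlands classification to the three building blocks (characters of $\R^\times$, characters of $\CC^\times$, and essentially discrete series of $\GL_2(\R)$), compute the $L$-factor and the action of the center on positive reals for each block, and note that the discrete parameters contribute only to the real parts of the $\mu_j$. The only superficial differences are your indexing convention for discrete series ($D_k$ with $k\geq 2$ and shift $(k-1)/2$ versus the paper's $D_{\ell_k}$ with $\ell_k\geq 1$ and shift $\ell_k/2$) and your explicit invocation of genericity to identify $\rho$ with the full induced representation rather than its Langlands quotient, neither of which affects the argument.
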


\begin{proof}
First, we discuss the case of $K=\R$. By \cite[Theorem~1]{Knapp}, there exist a partition $n=n_1+\cdots+n_r$ with terms $n_k\in\{1,2\}$, and for each $k\in\{1,\dotsc,r\}$, a representation $\sigma_k$ of $\GL_{n_k}(\R)$ such that $\rho$ is infinitesimally equivalent to the unique irreducible quotient of the representation unitarily induced from $\sigma_1\otimes\cdots\otimes\sigma_r$. If $n_k=1$, then there exists $(\ell_k,t_k)\in\{0,1\}\times\CC$ such that $\sigma_k=\sgn^{\ell_k}\otimes|\cdot|^{t_k}$. The corresponding $L$-function is given by \cite[(3.4)~\&~(3.6)]{Knapp} as \[L(s,\sigma_k)=\Gamma_\R(s+t_k+\ell_k).\]
Note that if $a>0$, then $\sigma_k(a)$ acts by the scalar $a^{t_k}$. If $n_k=2$, then there exists $(\ell_k,t_k)\in\Z_{\geq 1}\times\CC$ such that $\sigma_k=D_{\ell_k}\otimes|\det|^{t_k}$. The corresponding $L$-function is given by \cite[(3.4)~\&~(3.6)]{Knapp} as
\[L(s,\sigma_k)=\Gamma_\CC(s+t_k+\ell_k/2)=\Gamma_\R(s+t_k+\ell_k/2)\Gamma_\R(s+t_k+\ell_k/2+1).\]
Note that if $a>0$, then $\sigma_k(\diag(a,a))$ acts by the scalar $a^{2t_k}$. The $L$-function of $\rho$ is the product of all these gamma factors:
\begin{equation}\label{eq:Lrho_real}
L(s,\rho)=\prod_{\substack{1\leq k\leq r\\n_k=1}}\Gamma_\R(s+t_k+\ell_k)
\prod_{\substack{1\leq k\leq r\\n_k=2}}\Gamma_\R(s+t_k+\ell_k/2)\Gamma_\R(s+t_k+\ell_k/2+1).
\end{equation}
Moreover, if $a>0$, then $\rho(\diag(a,\dotsc,a))$ acts by the scalar $\prod_{k=1}^r a^{n_k t_k}$. Therefore, if we write \eqref{eq:Lrho_real} in the compact form \eqref{eq:Lrho}, then it follows from the fact that each $\ell_k$ in \eqref{eq:Lrho_real} is real that
\[\frac{\omega_\rho(a)}{|\omega_\rho(a)|}=\prod_{k=1}^r a^{i\Im(n_kt_k)}=\prod_{j=1}^n a^{i\Im(\mu_j)},\qquad a>0.\]
This confirms \eqref{eq:wrho} when $K=\R$.

Now, we discuss the case of $K=\CC$. By \cite[Theorem~4]{Knapp}, for each $k\in\{1,\dotsc,n\}$, there exists $(\ell_k,t_k)\in\Z\times\CC$ such that if $\sigma_k$ is the quasi-character $z\mapsto(z/|z|)^{\ell_k} |z|^{2t_k}$ of $\CC^{\times}$, then $\rho$ is infinitesimally equivalent to the unique irreducible quotient of the representation unitarily induced from $\sigma_1\otimes\dots\otimes\sigma_n$. The corresponding $L$-function is given by \cite[(4.6)]{Knapp} as
\[L(s,\sigma_k)=\Gamma_\CC(s+t_k+|\ell_k|/2).\]
Note that if $a>0$, then $\sigma_k(a)$ acts by the scalar $a^{2t_k}$. The $L$-function of $\rho$ is the product of all these gamma factors:
\begin{equation}\label{eq:Lrho_complex}
L(s,\rho)=\prod_{k=1}^n\Gamma_\CC(s+t_k+|\ell_k|/2).
\end{equation}
Moreover, if $a>0$, then $\rho(\diag(a,\dotsc,a))$ acts by the scalar $\prod_{k=1}^n a^{2t_k}$. Therefore, if we write \eqref{eq:Lrho_complex} in the compact form \eqref{eq:Lrho}, then it follows from the fact that each $|\ell_k|$ in \eqref{eq:Lrho_complex} is real that
\[\frac{\omega_\rho(a)}{|\omega_\rho(a)|}=\prod_{k=1}^n a^{i\Im(2t_k)}=\prod_{j=1}^n a^{i\Im(2\mu_j)},\qquad a>0.\]
This confirms \eqref{eq:wrho} when $K=\CC$.
\end{proof}

\begin{lemma}
\label{tpiformula}
Let $\pi\in\mathfrak{F}_n$ with archimedean Langlands parameters $\mu_{j,\pi}(v)$ as in \eqref{eq:Lpiv}.  If
\begin{equation}\label{eq:mpi}
m_{\pi}(v)=[F_v:\R]\sum_{j=1}^n \Im(\mu_{j,\pi}(v)),
\end{equation}
then
\begin{equation}\label{eq:tpi}
n[F:\Q]t_\pi=\sum_{v\mid\infty}m_{\pi}(v).
\end{equation}
In particular,
\begin{equation}\label{eq:tpibound}
|t_\pi|\leq\max_{\substack{v\mid\infty\\1\leq j\leq n}}|\Im(\mu_{j,\pi}(v)|.
\end{equation}
\end{lemma}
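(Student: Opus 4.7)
The plan is to evaluate the global central character $\omega_\pi$ on a single well-chosen idele, and to compare the answer given by the decomposition $\pi=\pi^*\otimes|\cdot|^{it_\pi}$ with the answer given by factoring locally and invoking \cref{lem:rhochar}. Let $a_\infty\in\A_F^\times$ denote the idele that equals $a>0$ at every archimedean place and $1$ at every finite place.

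For the first evaluation, I would use that twisting by $|\cdot|^{it_\pi}$ multiplies the central character by $|\cdot|^{int_\pi}$, so $\omega_\pi=\omega_{\pi^*}\cdot|\cdot|^{int_\pi}$ as characters on $\A_F^\times$. The assumption $\pi^*\in\mathfrak{F}_n^*$ gives $\omega_{\pi^*}(a_\infty)=1$, and $|a_\infty|=\prod_{v\mid\infty}a^{[F_v:\R]}=a^{[F:\Q]}$, so $\omega_\pi(a_\infty)=a^{in[F:\Q]t_\pi}$.

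For the second evaluation, I would factor as $\omega_\pi(a_\infty)=\prod_{v\mid\infty}\omega_{\pi_v}(a)$. Because $\omega_\pi$ is a unitary Hecke character, the product of local absolute values must be $1$, so $\omega_\pi(a_\infty)$ equals $\prod_{v\mid\infty}\omega_{\pi_v}(a)/|\omega_{\pi_v}(a)|$. Applying \cref{lem:rhochar} to each $\pi_v$ turns this into $a^{i\sum_{v\mid\infty}m_\pi(v)}$ by the very definition \eqref{eq:mpi}. Matching the two expressions for all $a>0$ delivers \eqref{eq:tpi}.

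The bound \eqref{eq:tpibound} then drops out from \eqref{eq:tpi} by the triangle inequality, using that $\sum_{v\mid\infty}[F_v:\R]=[F:\Q]$ and that each inner sum has $n$ terms. The only subtle point in the whole argument, and the step I would pay the most attention to, is the observation that the global unitarity of $\omega_\pi$ is precisely what lets one discard the absolute-value factors of the local central characters, since these need not be unitary individually; this is where the archimedean data gets tied to the twist parameter $t_\pi$.
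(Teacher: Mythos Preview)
Your argument is correct and is essentially the paper's own proof: both compute the central character on the diagonally embedded positive reals in two ways (via the twist decomposition and via \cref{lem:rhochar} applied locally), then compare. The paper phrases this in terms of the central matrices $D(a)\in\GL_n(\A_F)$ rather than the idele $a_\infty=\det(D(a))^{1/n}$, but that is purely cosmetic. One small remark: the step you flag as ``subtle'' is actually simpler than you suggest, since a unitary Hecke character has each local component unitary (test on ideles supported at a single place), so $|\omega_{\pi_v}(a)|=1$ holds individually, not just in the product; the paper uses this directly.
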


\begin{proof}
Since the central character $\omega_\pi$ is unitary, it follows from \eqref{eq:Lpiv}, \eqref{eq:mpi}, and \cref{lem:rhochar} that
\[
\omega_{\pi_v}(a)=a^{im_{\pi}(v)},\qquad v\mid\infty,\quad a>0.
\]
Now consider the archimedean central elements
\[
D(a)=\prod_{v\mid\infty}\diag(a,\dotsc,a)\in\GL_n(\A),\qquad a>0.
\]
Observe that $\det(D(a))$ has idele norm $a^{n[F:\Q]}$, and recall the definition of $t_\pi$ implicit in \eqref{eqn:pidecomp}. Calculating the action of $\pi(D(a))$ in two ways, we see that
\[
(a^{n[F:\Q]})^{it_\pi}=\omega_\pi(a)=\prod_{v\mid\infty}\omega_{\pi_v}(a)=\prod_{v\mid\infty}a^{im_{\pi}(v)},\qquad a>0.
\]
Comparing the two sides, the identity \eqref{eq:tpi} and the bound \eqref{eq:tpibound} follow readily.
\end{proof}

\subsection{Rankin--Selberg $L$-functions}

Let $\pi\in\mathfrak{F}_n^*$ and $\pi'\in\mathfrak{F}_{n'}^*$. For each $\kp\mid\kq_{\pi}\kq_{\pi'}$, there exist complex numbers $\alpha_{j,j',\pi\times\pi'}(\kp)$ with $1\leq j\leq n$ and $1\leq j'\leq n'$ such that if
\[
L(s,\pi_{\kp}\times\pi_{\kp}')=\begin{cases}
\prod_{j=1}^n \prod_{j'=1}^{n'}(1-\alpha_{j,\pi}(\kp)\alpha_{j',\pi'}(\kp)\N\kp^{-s})^{-1}&\mbox{if $\kp\nmid\kq_{\pi}\kq_{\pi'}$,}\\
\prod_{j=1}^n \prod_{j'=1}^{n'}(1-\alpha_{j,j',\pi\times\pi'}(\kp)\N\kp^{-s})^{-1}&\mbox{if $\kp\mid\kq_{\pi}\kq_{\pi'}$,}
\end{cases}
\]
then the Rankin--Selberg $L$-function $L(s,\pi\times\pi')$ equals the absolutely convergent product
\begin{equation}
\label{eqn:euler_prod}
L(s,\pi\times\pi')=\prod_{\kp}L(s,\pi_{\kp}\times\pi_{\kp}')=\sum_{\ka}\frac{\lambda_{\pi\times\pi'}(\ka)}{\N\ka^s},\qquad \Re(s)>1.	
\end{equation}

Let $\kq_{\pi\times\pi'}$ be the conductor of $L(s,\pi\times\pi')$. For each $v\mid\infty$, $1\leq j\leq n$, and $1\leq j'\leq n'$, there exists a Langlands parameter $\mu_{j,j',\pi\times\pi'}(v)$ such that if
\[
L_{\infty}(s,\pi\times\pi')=\prod_{v\mid\infty}\prod_{j=1}^n \prod_{j'=1}^{n'}\Gamma_v(s+\mu_{j,j',\pi\times\pi'}(v)),\quad \delta_{\pi\times\pi'}=\begin{cases}
	1&\mbox{if $\pi'=\tilde{\pi}$,}\\
	0&\mbox{otherwise,}
\end{cases}
\]
then the completed $L$-function
\[
\Lambda(s,\pi\times\pi')=(s(1-s))^{\delta_{\pi\times\pi'}}(D_F^{nn'}\N\kq_{\pi\times\pi'})^{s/2} L_{\infty}(s,\pi\times\pi')L(s,\pi\times\pi')
\]
is entire of order 1.  There exists $W(\pi\times\pi')\in\CC$ of modulus $1$ such that $\Lambda(s,\pi\times\pi')$ satisfies the functional equation
\[
\Lambda(s,\pi\times\pi')=W(\pi\times\pi')\Lambda(1-s,\tilde{\pi}\times\tilde{\pi}')=W(\pi\times\pi')\overline{\Lambda(1-\bar{s},\pi\times\pi')}.
\]

The absolute convergence of \eqref{eqn:euler_prod} ensures that $\re(\mu_{j,j',\pi\times\pi'}(v))\geq -1$. The nontrivial zeros of $L(s,\pi\times\pi')$ are the zeros of $\Lambda(s,\pi\times\pi')$, and they lie in the critical strip $0<\Re(s)<1$. It is conjectured (GRH) that they actually lie on the critical line $\Re(s)=1/2$. Finally, the analytic conductor is $C(\pi\times\pi')=C(0,\pi\times\pi')$, where
\[
C(it,\pi\times\pi')=D_F^{nn'}\N\kq_{\pi\times\pi'}\prod_{v\mid\infty}\prod_{j=1}^n \prod_{j'=1}^{n'}
(|\mu_{j,j',\pi\times\pi'}(v)+it|+3)^{[F_v:\R]}.
\]
We have bounds \cite[Lemma~A.1]{Wattanawanichkul}
\begin{equation}
\label{eqn:BH}
C(it,\pi\times\pi')\leq C(\pi\times\pi') (|t|+3)^{nn'[F:\Q]}\leq C(\pi)^{n'}C(\pi')^n (|t|+3)^{nn'[F:\Q]}.
\end{equation}

More generally, for $\pi\in\mathfrak{F}_{n}$ and $\pi'\in\mathfrak{F}_{n'}$, we have
\[
L(s,\pi\times\pi') = L(s+it_{\pi}+it_{\pi'},\pi^*\times\pi'^*),\quad 
C(it,\pi\times\pi')=C(it+it_{\pi}+it_{\pi'},\pi^*\times\pi'^*).
\]
Furthermore, we have that
\begin{equation}
\label{eqn:conjugate_symmetry}
L(s,\tilde{\pi}\times\tilde{\pi}') = \overline{L(\bar{s},\pi\times\pi')}.
\end{equation}

\subsection{Convexity}

Our proofs require strong bounds for Rankin--Selberg $L$-functions and their derivatives. 

\begin{lemma}[{\cite[Lemma~3.2]{HarcosThorner}}]
\label{lem:Li1}
For $(\pi,\pi')\in\mathfrak{F}_n\times\mathfrak{F}_{n'}$, consider the holomorphic function
\[
\cL(s,\pi\times\pi')=\left(\frac{s+it_{\pi}+it_{\pi'}-1}{s+it_{\pi}+it_{\pi'}+1}\right)^{\delta_{\pi^*\times\pi'^*}}L(s,\pi\times\pi'),\qquad\Re(s)>-1.
\]
If $j\geq 0$, $\sigma\geq 0$, and $\epsilon>0$, then
\[
\cL^{(j)}(\sigma,\pi\times\pi')\ll_{n,n',[F:\Q],j,\epsilon}C(\pi\times\pi')^{\max(1-\sigma,0)/2+\epsilon}.
\]
\end{lemma}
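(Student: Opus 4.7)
The proof follows a standard convexity/Phragmén--Lindelöf interpolation combined with Cauchy's integral formula to pass to derivatives. First I would verify that $\cL(s,\pi\times\pi')$ is holomorphic on $\Re(s)>-1$: the only candidate pole of $L(s,\pi\times\pi')$ in that half-plane occurs when $\delta_{\pi^*\times\pi'^*}=1$, at $s=1-it_\pi-it_{\pi'}$, and it is simple; the zero of $s+it_\pi+it_{\pi'}-1$ at that point cancels it, while $s+it_\pi+it_{\pi'}+1$ does not vanish on $\Re(s)>-1$.

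Next I would establish bounds on two vertical lines. On $\Re(s)=1+\epsilon$, the shifted Dirichlet series \eqref{eqn:euler_prod} converges absolutely and $L(s,\pi\times\pi')\ll_{n,n',\epsilon}1$, while the bracketed factor is uniformly bounded, yielding $\cL(s,\pi\times\pi')\ll_{n,n',\epsilon}1$. On $\Re(s)=-\epsilon$, I would apply the functional equation
\[
L(s,\pi\times\pi')=W(\pi\times\pi')(D_F^{nn'}\N\kq_{\pi\times\pi'})^{1/2-s}\frac{L_\infty(1-s,\tilde\pi\times\tilde\pi')}{L_\infty(s,\pi\times\pi')}L(1-s,\tilde\pi\times\tilde\pi'),
\]
bounding $L(1-s,\tilde\pi\times\tilde\pi')$ via its Dirichlet series and \eqref{eqn:conjugate_symmetry} and estimating the archimedean ratio by Stirling's formula. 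After collecting the arithmetic and archimedean contributions and invoking \eqref{eqn:BH}, one obtains
\[
\cL(s,\pi\times\pi')\ll_{n,n',[F:\Q],\epsilon}C(\pi\times\pi')^{1/2+\epsilon}(|\Im(s)|+3)^{nn'[F:\Q](1/2+\epsilon)}
\]
on $\Re(s)=-\epsilon$.

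Then I would apply the Phragmén--Lindelöf principle on the strip $-\epsilon\leq\Re(s)\leq 1+\epsilon$. Since $\Lambda(s,\pi\times\pi')$ is entire of order one, $\cL$ has finite order in the strip, validating PL; interpolating the two edge bounds gives, for $\sigma\in[-\epsilon,1+\epsilon]$,
\[
\cL(\sigma+it,\pi\times\pi')\ll_{n,n',[F:\Q],\epsilon}C(\pi\times\pi')^{\max(1-\sigma,0)/2+\epsilon}(|t|+3)^{nn'[F:\Q]\max(1-\sigma,0)/2+\epsilon}.
\]
Finally, for $\sigma\geq 0$, I would apply Cauchy's integral formula on a circle of radius $r=r(n,n',[F:\Q],j,\epsilon)>0$ centered at $\sigma$, chosen small enough that the circle lies in $\Re(z)>-\epsilon$ and the $(|\Im(z)|+3)^{\cdots}$ factor on the circle is absorbed into $C(\pi\times\pi')^\epsilon$:
\[
|\cL^{(j)}(\sigma,\pi\times\pi')|\leq\frac{j!}{r^j}\max_{|z-\sigma|=r}|\cL(z,\pi\times\pi')|\ll_{n,n',[F:\Q],j,\epsilon}C(\pi\times\pi')^{\max(1-\sigma,0)/2+\epsilon}.
\]

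The main obstacle is the careful bookkeeping in the functional equation step: the archimedean and arithmetic components of the analytic conductor emerge from Stirling with different exponents and must be reassembled into $C(\pi\times\pi')$ via \eqref{eqn:BH}, while the $|\Im(s)|$-growth must remain polynomial so that PL applies and the final Cauchy step costs only $C(\pi\times\pi')^\epsilon$.
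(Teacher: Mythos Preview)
The paper does not include a proof of this lemma; it is quoted verbatim from \cite[Lemma~3.2]{HarcosThorner} with no argument given. Your outline is the standard convexity argument (Phragm\'en--Lindel\"of between the edge of absolute convergence and the reflected line, then Cauchy's formula to pass to derivatives), and it is essentially what the cited reference carries out. One small point worth tightening: the uniform bound $L(s,\pi\times\pi')\ll_{n,n',[F:\Q],\epsilon}1$ on $\Re(s)=1+\epsilon$ is not a triviality of ``absolute convergence'' alone---it relies on the Jacquet--Shalika/Luo--Rudnick--Sarnak bounds toward Ramanujan (or an equivalent device) to make the implied constant independent of $\pi,\pi'$---but this is routine and is handled in the cited source.
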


\section{Goldfeld--Hoffstein--Lieman type arguments}

The main goal of this section is to prove \cref{lem:GHLnew} below.  This generalizes and refines the Lemma in \cite[Appendix]{HoffsteinLockhart}.

\subsection{Isobaric sums}

The Langlands theory of Eisenstein series associates to any $\ell$-tuple 
$(\pi_1,\dotsc,\pi_\ell)\in\mathfrak{F}_{n_1}\times\dotsb\times \mathfrak{F}_{n_\ell}$ an automorphic representation of $\GL_{n_1+\dotsb+n_\ell}(\A_F)$, the isobaric sum, denoted $\Pi=\pi_1\boxplus\dotsb\boxplus\pi_\ell$. The contragredient is $\tilde{\Pi}=\tilde{\pi}_1\boxplus\dotsb\boxplus\tilde{\pi}_\ell$, and
\[
L(s,\Pi)=\prod_{j=1}^\ell L(s,\pi_j),\qquad \Re(s)>1.
\]
We let $\mathfrak{A}_n$ denote the set of isobaric automorphic representations of $\GL_{n}(\A_F)$.  Given $\Pi=\pi_1\boxplus\dotsb\boxplus\pi_\ell\in\mathfrak{A}_n$ and $\Pi'=\pi_1'\boxplus\dotsb\boxplus\pi_m'\in\mathfrak{A}_{n'}$, we define the Rankin--Selberg $L$-function
\[
L(s,\Pi\times\Pi')=\prod_{j=1}^\ell \prod_{k=1}^m L(s,\pi_j\times\pi_k')=\sum_{\ka}\frac{\lambda_{\Pi\times\Pi'}(\ka)}{\N\ka^s},\qquad \Re(s)>1.
\]
Its analytic conductor is $C(\Pi\times\Pi')=C(0,\Pi\times\Pi')$, where
\[
C(it,\Pi\times\Pi') = \prod_{j=1}^n \prod_{k=1}^{n'}C(it,\pi_j\times\pi_{k}').
\]

\begin{lemma}[{\cite[Lemma~a]{HoffsteinRamakrishnan}}]
\label{lem:nonneg}
If $\Pi\in\mathfrak{A}_n$, then $L(s,\Pi\times\tilde{\Pi})$ has nonnegative Dirichlet coefficients, as do $\log L(s,\Pi\times\tilde{\Pi})$ and $-L'(s,\Pi\times\tilde{\Pi})/L(s,\Pi\times\tilde{\Pi})$.
\end{lemma}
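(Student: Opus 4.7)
The plan is to establish nonnegativity of the Dirichlet coefficients of $\log L(s,\Pi\times\tilde{\Pi})$ first, since the other two assertions then follow by routine operations: exponentiating a Dirichlet series with nonnegative coefficients yields another such series (because $e^x=\sum_{k\geq 0}x^k/k!$ has nonnegative Taylor coefficients and convolution preserves nonnegativity), and negating the logarithmic derivative introduces only the positive factor $m\log\N\kp$ on each local term, so $-L'(s,\Pi\times\tilde{\Pi})/L(s,\Pi\times\tilde{\Pi})$ inherits nonnegativity from $\log L(s,\Pi\times\tilde{\Pi})$.

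To analyze $\log L(s,\Pi\times\tilde{\Pi})$, I would write $\Pi=\pi_1\boxplus\cdots\boxplus\pi_\ell$ and use the Euler factorization
$$\log L(s,\Pi\times\tilde{\Pi})=\sum_{\kp}\log L(s,\Pi_{\kp}\times\tilde{\Pi}_{\kp}),$$
reducing the task to a single local factor at each prime. At an unramified $\kp$, let $\{\alpha_i(\kp)\}_{i=1}^n$ denote the concatenation of the Satake parameters of the $\pi_j$ at $\kp$. Since unitarity yields $\tilde{\pi}_j\cong\bar{\pi}_j$, the Satake parameters of $\tilde{\Pi}_{\kp}$ are $\{\overline{\alpha_i(\kp)}\}$, and expanding the logarithmic local factor as a Dirichlet series gives
$$\log L(s,\Pi_{\kp}\times\tilde{\Pi}_{\kp})=\sum_{i_1,i_2=1}^n\sum_{m\geq 1}\frac{\bigl(\alpha_{i_1}(\kp)\overline{\alpha_{i_2}(\kp)}\bigr)^m}{m\,\N\kp^{ms}}=\sum_{m\geq 1}\frac{1}{m\,\N\kp^{ms}}\left|\sum_{i=1}^n\alpha_i(\kp)^m\right|^2,$$
which is manifestly a nonnegative Dirichlet series.

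The main obstacle is the ramified primes, where no direct Satake parametrization is available. Following \cite[Lemma~a]{HoffsteinRamakrishnan}, my plan is to invoke the local Langlands correspondence for $\GL_n$ to attach to $\Pi_{\kp}$ a Frobenius-semisimple Weil--Deligne representation $\rho_{\kp}$, so that $L(s,\Pi_{\kp}\times\tilde{\Pi}_{\kp})$ equals the local $L$-factor associated with $\rho_{\kp}\otimes\rho_{\kp}^\vee$. Unitarity of $\Pi_{\kp}$ gives $\rho_{\kp}^\vee\cong\bar{\rho}_{\kp}$, and the tensor-product structure combined with Frobenius-semisimplicity forces the logarithmic Dirichlet expansion of this local factor to be a sum of squared magnitudes of traces $|\mathrm{tr}(\mathrm{Frob}_{\kp}^m\mid V_{\kp})|^2$, where $V_{\kp}$ is the inertia-fixed, monodromy-annihilated subspace of $\rho_{\kp}$. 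This is again a nonnegative Dirichlet series. Assembling these local contributions over all primes yields the desired nonnegativity of $\log L(s,\Pi\times\tilde{\Pi})$, completing the proof.
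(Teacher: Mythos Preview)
The paper does not supply a proof of this lemma; it is quoted verbatim from \cite[Lemma~a]{HoffsteinRamakrishnan} and used as a black box. So there is no ``paper's own proof'' to compare against, and your write-up is effectively a reconstruction of the cited result.

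Your reduction to nonnegativity of $\log L$ and your treatment of the unramified primes are both correct. The gap is in the ramified case. You assert that the $\N\kp^{-ms}$-coefficient of $\log L(s,\Pi_\kp\times\tilde\Pi_\kp)$ equals $|\mathrm{tr}(\mathrm{Frob}_\kp^m\mid V_\kp)|^2$, where $V_\kp$ is the inertia-fixed, monodromy-annihilated subspace of $\rho_\kp$ itself. This is not true in general: the relevant space is the inertia-fixed, monodromy-annihilated subspace of the \emph{tensor product} $\rho_\kp\otimes\bar\rho_\kp$, and neither inertia invariants nor the kernel of $N\otimes 1+1\otimes\bar N$ distribute over the tensor product. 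Already for the Steinberg parameter $\mathrm{Sp}(2)$ one finds a two-dimensional kernel in the tensor square versus a one-dimensional kernel in each factor, and the resulting Frobenius trace is $q^m+1$, not the $q^m$ your formula predicts.

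The conclusion you want (nonnegativity of these traces) does hold, but establishing it requires more: either decompose $\rho_\kp$ into indecomposables $\sigma\otimes\mathrm{Sp}(m)$ and compute the local Rankin--Selberg factor of each pair explicitly, or appeal directly to the Jacquet--Piatetski-Shapiro--Shalika description of the ramified local factors. Either route shows that the coefficient at $\N\kp^{-ms}$ is a sum of terms of the form $|\mathrm{tr}(\mathrm{Frob}_\kp^m\mid W)|^2$ over suitable isotypic pieces $W$, which is what forces nonnegativity.
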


The next result generalizes the Lemma in \cite[Appendix]{HoffsteinLockhart}.

\begin{lemma}
\label{lem:GHLnew}
Let $\Pi=\pi_1\boxplus\dotsb\boxplus \pi_\ell\in\mathfrak{A}_n$, and let $r$ denote the order of the pole of $L(s,\Pi\times\tilde{\Pi})$ at $s=1$. Let
\begin{equation}
\label{eq:Edef}
E(\Pi\times\tilde{\Pi})=\log C(\Pi\times\tilde{\Pi})+\sum_{\substack{1\leq j<k\leq\ell\\\pi_j^*=\pi_k^*\\\pi_j\neq\pi_k}}\frac{1}{|t_{\pi_j}-t_{\pi_k}|}.
\end{equation}
Then $L(\sigma+it,\Pi\times\tilde{\Pi})$ has at most $r$ zeros (counted with multiplicity) in the region
\[
\sigma\geq 1-\frac{1}{7rE(\Pi\times\tilde{\Pi})}
\qquad\text{and}\qquad
|t|\leq\frac{1}{12\sqrt{r}E(\Pi\times\tilde{\Pi})}.
\]
\end{lemma}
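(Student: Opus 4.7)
The plan is a Goldfeld--Hoffstein--Lieman type positivity argument grounded in Lemma~\ref{lem:nonneg} and the Hadamard factorization of $\Lambda(s,\Pi\times\tilde\Pi)$. For real $\sigma_0>1$, the non-negativity $-\Re(L'/L)(\sigma_0,\Pi\times\tilde\Pi)\geq 0$ combined with the product decomposition $L(s,\Pi\times\tilde\Pi)=\prod_{j,k}L(s,\pi_j\times\tilde\pi_k)$ yields an expression in terms of poles, zeros, and archimedean data. The cuspidal factors have poles at $s=1$ when $\pi_j=\pi_k$ (contributing the main $r/(\sigma_0-1)$) and ``off-axis'' poles at $s=1+i(t_{\pi_k}-t_{\pi_j})$ when $\pi_j^*=\pi_k^*$ but $\pi_j\neq\pi_k$; the off-axis contributions are bounded by $\Re(\sigma_0-1+i\tau)^{-1}\leq(2|\tau|)^{-1}$, which is precisely the origin of the separation sum in~\eqref{eq:Edef}. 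Since each zero $\rho$ contributes $\Re(1/(\sigma_0-\rho)+1/\rho)\geq 0$, positivity gives the baseline inequality
\[
\sum_\rho \Re\!\left[\frac{1}{\sigma_0-\rho}+\frac{1}{\rho}\right]\;\leq\;\frac{r}{\sigma_0-1}\;+\;c_0\,E(\Pi\times\tilde\Pi),
\]
for a small absolute $c_0$, the sum running over all nontrivial zeros of $L(s,\Pi\times\tilde\Pi)$.

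To amplify zero contributions beyond the main pole term, suppose for contradiction that $L(s,\Pi\times\tilde\Pi)$ has $r+1$ zeros $\rho_j=\beta_j+i\gamma_j$ (counted with multiplicity) in the box. I would form the auxiliary isobaric sum
\[
\Xi \;=\; \Pi\boxplus(\Pi\otimes|\det|^{-i\gamma_1})\boxplus\cdots\boxplus(\Pi\otimes|\det|^{-i\gamma_{r+1}})\in\mathfrak{A}_{n(r+2)},
\]
for which $L(s,\Xi\times\tilde\Xi)=\prod_{j,k=0}^{r+1}L(s+i(\gamma_k-\gamma_j),\Pi\times\tilde\Pi)$ with $\gamma_0=0$. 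Self-duality of $\Pi\times\tilde\Pi$ (giving both $\rho_m$ and $\bar\rho_m$ as zeros) produces \emph{real} zeros of $L(s,\Xi\times\tilde\Xi)$ at each $\beta_m$ with multiplicity at least $2$, coming from the two shifted factors $L(s\pm i\gamma_m,\Pi\times\tilde\Pi)$. These real zeros each contribute roughly $1/\eta$ to the baseline inequality (with $\sigma_0=1+\eta$), a strictly larger scale than the $O(E)$ contribution per off-axis zero in the original setting. That larger scale should eventually overwhelm the amplified pole order $r(r+2)$ at $s=1$, forcing a contradiction once $\eta$ is tuned to order $1/(rE(\Pi\times\tilde\Pi))$, which is the source of the constants $7$ and $12\sqrt r$ in the statement.

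The main technical obstacle is controlling $E(\Xi\times\tilde\Xi)$. The log-conductor grows predictably as $\log C(\Xi\times\tilde\Xi)\ll (r+2)^2\log C(\Pi\times\tilde\Pi)$, acceptable since the shifts $|\gamma_j|\leq 1/(12\sqrt r E)$ are bounded. The delicate issue is the separation sum applied to $\Xi$: pairs of new cuspidal components $\pi_m\otimes|\det|^{-i\gamma_j}$ and $\pi_{m'}\otimes|\det|^{-i\gamma_{j'}}$ share a starred representation whenever $\pi_m^*=\pi_{m'}^*$, and the corresponding denominator $\lvert(t_{\pi_m}-t_{\pi_{m'}})-(\gamma_j-\gamma_{j'})\rvert$ may be much smaller than any denominator appearing in $E(\Pi\times\tilde\Pi)$ itself. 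I expect this to be handled either through a symmetric choice of shifts (for instance pairing each $\gamma_j$ with $-\gamma_j$, forcing the problematic terms to cancel in pairs) or by a pigeonhole argument restricting amplification to a well-separated sub-collection of the $\gamma_j$. Pinning down the sharp constants $7$ and $12\sqrt r$ in the box against these delicate estimates is where the bulk of the technical work lies.
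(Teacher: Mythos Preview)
Your baseline inequality is essentially correct and is precisely what the paper establishes (via \cite[Lemma~3.1]{Wattanawanichkul} applied factorwise): for $\sigma\in(1,2)$,
\[
\sum_{L(\rho,\Pi\times\tilde\Pi)=0}\Re\Bigl(\frac{1}{\sigma-\rho}\Bigr)\leq\frac{r}{\sigma-1}+E(\Pi\times\tilde\Pi).
\]
But from this point on you take an unnecessary detour. The paper does \emph{not} amplify via an auxiliary isobaric sum $\Xi$; it extracts the lemma directly from the baseline inequality by a single optimization. With $\sigma=1+1/(2E)$, every zero $\rho=\beta+i\gamma$ in the stated box satisfies $(\sigma-\beta)^2+\gamma^2\leq(1+\tfrac{1}{36r})(\sigma-\beta)^2$, so each such zero already contributes
\[
\Re\Bigl(\frac{1}{\sigma-\rho}\Bigr)\geq\Bigl(1+\tfrac{1}{36r}\Bigr)^{-1}\frac{1}{\sigma-\beta}\geq\Bigl(1+\tfrac{1}{36r}\Bigr)^{-1}\frac{1}{\tfrac{1}{2E}+\tfrac{1}{7rE}}.
\]
Dropping all other (nonnegative) terms on the left and comparing with $r/(\sigma-1)+E=(2r+1)E$ on the right yields $\mathcal{N}\leq(1+\tfrac{1}{36r})(\tfrac12+\tfrac{1}{7r})(2r+1)<r+1$, hence $\mathcal{N}\leq r$. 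That is the entire argument; the constants $7$ and $12\sqrt{r}$ are chosen exactly so that this arithmetic closes.

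Your amplification scheme would, in principle, prove something, but the obstacle you identify is genuine and serious: the separation sum in $E(\Xi\times\tilde\Xi)$ acquires denominators $|(t_{\pi_m}-t_{\pi_{m'}})-(\gamma_j-\gamma_{j'})|$, and since distinct $\gamma_j$ may coincide (zeros of higher multiplicity) or be arbitrarily close, these terms can blow up uncontrollably. Neither the symmetric-shift trick nor a pigeonhole thinning resolves this without losing the sharp constants. The paper's direct route sidesteps the issue entirely because no new poles are introduced. In short: your first paragraph is the whole proof; discard the rest.
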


\begin{remark}
\label{rem:E}
The terms of the $(j,k)$-sum in \eqref{eq:Edef} are the reciprocal imaginary parts of the poles of $L(s,\Pi\times\tilde\Pi)$ in the upper half-plane, counted with multiplicity.
\end{remark}

\begin{proof}
Let $\sigma\in(1,2)$. We apply \cite[Lemma~3.1]{Wattanawanichkul} to each factor
\[L(\sigma,\pi_j\times\tilde\pi_k)=L(\sigma+it_{\pi_j}-it_{\pi_k},\pi_j^*\times\tilde\pi_k^*)\]
of $L(\sigma,\Pi\times\tilde{\Pi})$ to conclude that
\begin{align*}
\sum_{L(\rho,\Pi\times\tilde{\Pi})=0}\re\left(\frac{1}{\sigma-\rho}\right)\leq
&\sum_{\substack{1\leq j,k\leq\ell\\\pi_j^*=\pi_k^*}}\re\left(\frac{1}{\sigma-1+it_{\pi_j}-it_{\pi_k}}\right)\\
&+\frac{\ell^2}{2}+\frac{1}{2}\log C(\Pi\times\tilde{\Pi})+\frac{L'}{L}(\sigma,\Pi\times\tilde{\Pi}).
\end{align*}
The last term is nonpositive by \cref{lem:nonneg}, hence
\begin{align*}
\sum_{L(\rho,\Pi\times\tilde{\Pi})=0}\re\left(\frac{1}{\sigma-\rho}\right)
&\leq\sum_{\substack{1\leq j,k\leq\ell\\\pi_j^*=\pi_k^*}}\frac{\sigma-1}{(\sigma-1)^2+(t_{\pi_j}-t_{\pi_k})^2}
+\log C(\Pi\times\tilde{\Pi})\\
&\leq\frac{r}{\sigma-1}+\sum_{\substack{1\leq j<k\leq\ell\\\pi_j^*=\pi_k^*\\\pi_j\neq\pi_k}}\frac{1}{|t_{\pi_j}-t_{\pi_k}|}
+\log C(\Pi\times\tilde{\Pi}).
\end{align*}
Let
\[
\sigma = 1+\frac{1}{2E(\Pi\times\tilde{\Pi})},
\]
and let $\mathcal{N}\geq 0$ be the number of zeros $\rho=\beta+i\gamma$ (counted with multiplicity) of $L(s,\Pi\times\tilde{\Pi})$ satisfying
\[
\beta\geq 1-\frac{1}{7rE(\Pi\times\tilde{\Pi})}
\qquad\text{and}\qquad
|\gamma|\leq\frac{1}{12\sqrt{r}E(\Pi\times\tilde{\Pi})}.
\]
These inequalities imply that
\[
(\sigma-\beta)^2+\gamma^2\leq(\sigma-\beta)^2+\frac{(\sigma-1)^2}{36r}\leq\left(1+\frac{1}{36r}\right)(\sigma-\beta)^2,
\]
hence also that
\[
\re\left(\frac{1}{\sigma-\rho}\right)=\frac{\sigma-\beta}{(\sigma-\beta)^2+\gamma^2}\geq
\left(1+\frac{1}{36r}\right)^{-1}(\sigma-\beta)^{-1}.
\]
We conclude via nonnegativity that
\[
\frac{\mathcal{N}}{\left(1+\frac{1}{36r}\right)\left(\sigma-1+\frac{1}{7rE(\Pi\times\tilde{\Pi})}\right)}\leq\sum_{L(\rho,\Pi\times\tilde{\Pi})=0}\re\left(\frac{1}{\sigma-\rho}\right)\leq \frac{r}{\sigma-1}+E(\Pi\times\tilde{\Pi}).
\]
A comparison of the two sides shows that
\[
\mathcal{N}\leq\left(1+\frac{1}{36r}\right)\left(\frac{1}{2}+\frac{1}{7r}\right)(2r+1)<r+1.
\]
Since $\mathcal{N}$ is an integer, it follows that $\mathcal{N}\leq r$, as desired.
\end{proof}

\subsection{Applications of \cref{lem:GHLnew}}
For $(\pi,\pi')\in\mathfrak{F}_n\times\mathfrak{F}_{n'}$ and $(\chi,\chi_1,\chi_2)\in\mathfrak{F}_1^3$, we define
\begin{alignat}{3}
\label{eqn:Pi}
\Pi&=\pi\boxplus\pi\otimes\chi\boxplus\tilde\pi'\boxplus\tilde\pi'\otimes\bar{\chi},\qquad& 
D(s)&=D(s;\pi,\pi',\chi)=L(s,\Pi\times\tilde{\Pi}),\\
\label{eqn:Pi2}
\Pi'&=\pi\boxplus\tilde\pi'\otimes\bar{\chi},& 
H(s)&=H(s;\pi,\pi',\chi)=L(s,\Pi'\times\tilde{\Pi}'),\\
\label{eqn:Pi3}
\Pi''&=\pi\boxplus \pi\otimes\chi_1\boxplus\pi\otimes\chi_2,& 
I(s)&=I(s;\pi,\chi_1,\chi_2)=L(s,\Pi''\times\tilde{\Pi}'').
\end{alignat}
It follows from \cref{lem:nonneg} that the above $L$-functions have nonnegative Dirichlet coefficients. Moreover, they factor into simpler $L$-functions as follows:
\begin{align}
D(s)=
\label{eqn:D_def}
&~L(s,\pi\times\tilde\pi)^2 L(s,\pi'\times\tilde\pi')^2 L(s,\pi\times(\pi'\otimes\chi))^2 L(s,\tilde\pi\times(\tilde\pi'\otimes\bar{\chi}))^2\\
\notag
&\cdot L(s,\pi\times(\tilde\pi\otimes\chi)) L(s,\pi'\times(\tilde\pi'\otimes\chi)) L(s,\tilde\pi\times\tilde\pi') L(s,\pi\times(\pi'\otimes\chi^2))\\
\notag
&\cdot L(s,\pi\times(\tilde\pi\otimes\bar{\chi})) L(s,\pi'\times(\tilde\pi'\otimes\bar{\chi})) L(s,\pi\times\pi') L(s,\tilde\pi\times(\tilde\pi'\otimes\bar{\chi}^2)),\\[6pt]
\label{eqn:D_def2}
H(s)=&~L(s,\pi\times\tilde\pi)L(s,\pi'\times\tilde\pi')L(s,\pi\times(\pi'\otimes\chi))L(s,\tilde\pi\times(\tilde\pi'\otimes\bar{\chi})),\\[6pt]
\label{eqn:D_def3}
I(s)=&~L(s,\pi\times\tilde{\pi})^3 L(s,\pi\times(\tilde{\pi}\otimes\chi_1))L(s,\pi\times(\tilde{\pi}\otimes\bar{\chi}_1))L(s,\pi\times(\tilde{\pi}\otimes\chi_2))\\
\notag
&\cdot L(s,\pi\times(\tilde{\pi}\otimes\bar{\chi}_2))L(s,\pi\times(\tilde{\pi}\otimes\chi_1\bar{\chi}_2))L(s,\pi\times(\tilde{\pi}\otimes\bar{\chi}_1 \chi_2)).
\end{align}
Using \eqref{eqn:BH}, we conveniently bound the analytic conductor of $D(s)$ (resp. $H(s)$) by $Q^4$ (resp. $Q$), where
\begin{equation}
\label{eqn:Rdef}
Q=Q(\pi,\pi',\chi)=(C(\pi)C(\pi'))^{2(n+n')}C(\chi)^{(n+n')^2}.
\end{equation}
Similarly, we conveniently bound the analytic conductor of $I(s)$ by $R^9$, where
\begin{equation}
\label{eqn:Rdef2}
R=R(\pi,\chi_1,\chi_2)=C(\pi)^{2n}(C(\chi_1)C(\chi_2))^{n^2/2}.
\end{equation}

\begin{lemma}
\label{lem:GHL}
Let $(\pi,\pi',\chi)\in\mathfrak{F}_n\times\mathfrak{F}_{n'}\times\mathfrak{F}_1$ and $\epsilon\in(0,1)$.  Let $D(s)=D(s;\pi,\pi',\chi)$ be as in \eqref{eqn:D_def}, and $Q=Q(\pi,\pi',\chi)$ as in \eqref{eqn:Rdef}.  Assume that the $L$-functions
\begin{equation}
\label{eqn:Lfns}
L(s,\pi\times\pi'),\qquad L(s,\pi\times(\pi'\otimes\chi)),\qquad L(s,\pi\times(\pi'\otimes\chi^2))
\end{equation}
are entire.  If $\pi\otimes\chi^*=\pi$ or $\pi'\otimes\chi^*=\pi'$, then assume also that $|t_{\chi}|\geq Q^{-\epsilon}$.  Then $D(\sigma+it)$ has at most four zeros (counted with multiplicity) in the region
\begin{equation}
\label{eqn:GHL_interval}
\sigma\geq 1-\epsilon 2^{-8}Q^{-\epsilon}
\qquad\text{and}\qquad
|t|\leq\epsilon 2^{-8}Q^{-\epsilon}.
\end{equation}
\end{lemma}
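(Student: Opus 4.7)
The plan is to invoke \cref{lem:GHLnew} directly on the isobaric sum $\Pi$ of \eqref{eqn:Pi}, so that $D(s)=L(s,\Pi\times\tilde\Pi)$ is precisely the object to which that lemma applies. The strategy reduces to two quantitative inputs: the pole order $r$ of $D(s)$ at $s=1$ equals $4$, and $E(\Pi\times\tilde\Pi)\leq 6\epsilon^{-1}Q^{\epsilon}$. Granting these, \cref{lem:GHLnew} produces at most $r=4$ zeros (counted with multiplicity) of $D(s)$ in the region $\sigma\geq 1-1/(28E)$ and $|t|\leq 1/(24E)$, both of which comfortably contain the target region \eqref{eqn:GHL_interval}.

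First I would pin down the pole order by scanning the twelve factors in \eqref{eqn:D_def}. The factors $L(s,\pi\times\tilde\pi)^{2}$ and $L(s,\pi'\times\tilde\pi')^{2}$ each contribute a double pole at $s=1$ (since $t_{\tilde\pi}=-t_{\pi}$ and $t_{\tilde\pi'}=-t_{\pi'}$ align the shifts), totaling order four. Of the remaining ten factors, six are entire on $\CC$ by the hypothesis \eqref{eqn:Lfns} combined with the symmetry \eqref{eqn:conjugate_symmetry}. The other four, namely $L(s,\pi\times(\tilde\pi\otimes\chi^{\pm 1}))$ and $L(s,\pi'\times(\tilde\pi'\otimes\chi^{\pm 1}))$, can have a pole \emph{at the point} $s=1$ only when the relevant self-twisting relation $\pi\otimes\chi^{*}=\pi$ or $\pi'\otimes\chi^{*}=\pi'$ holds \emph{and} simultaneously $t_{\chi}=0$. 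The latter possibility is precisely ruled out by the hypothesis $|t_{\chi}|\geq Q^{-\epsilon}$. Hence $r=4$.

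Next I would control $E(\Pi\times\tilde\Pi)$ through the sum in \eqref{eq:Edef}. The four components of $\Pi$ are $\pi,\pi\otimes\chi,\tilde\pi',\tilde\pi'\otimes\bar\chi$. Any cross pair between $\{\pi,\pi\otimes\chi\}$ and $\{\tilde\pi',\tilde\pi'\otimes\bar\chi\}$ sharing a common $*$-part would force one of the three $L$-functions in \eqref{eqn:Lfns} to have a pole, contradicting the entirety hypothesis. The only surviving candidates are the self-twist pairs $(\pi,\pi\otimes\chi)$ and $(\tilde\pi',\tilde\pi'\otimes\bar\chi)$, which appear in the sum only when $\pi\otimes\chi^{*}=\pi$ or $\pi'\otimes\chi^{*}=\pi'$, respectively, and each contributes exactly $1/|t_{\chi}|\leq Q^{\epsilon}$ by the hypothesis. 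Combined with $\log C(\Pi\times\tilde\Pi)\leq \log Q^{4}\leq 4\epsilon^{-1}Q^{\epsilon}$ (via the universal inequality $\log x\leq \epsilon^{-1}x^{\epsilon}$ for $x\geq 1$, $\epsilon\in(0,1)$), I obtain $E(\Pi\times\tilde\Pi)\leq(4\epsilon^{-1}+2)Q^{\epsilon}\leq 6\epsilon^{-1}Q^{\epsilon}$, from which $1/(28E)\geq\epsilon/(168Q^{\epsilon})>\epsilon\cdot 2^{-8}Q^{-\epsilon}$, and likewise $1/(24E)>\epsilon\cdot 2^{-8}Q^{-\epsilon}$.

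The main obstacle is the bookkeeping: one must carefully distinguish, for each factor in \eqref{eqn:D_def}, whether its pole sits at the point $s=1$ or at some shifted point such as $s=1-it_{\chi}$, and then match every potential cross pair in the $E$-sum to a pole in one of the three $L$-functions that \eqref{eqn:Lfns} declares entire. The quantitative separation $|t_{\chi}|\geq Q^{-\epsilon}$ does double duty: it simultaneously pushes the ``dangerous'' self-twisted factors of $D(s)$ away from the point $s=1$ and bounds the two surviving self-pair contributions to $E(\Pi\times\tilde\Pi)$ by $Q^{\epsilon}$.
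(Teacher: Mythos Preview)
Your proposal is correct and follows essentially the same route as the paper's proof: apply \cref{lem:GHLnew} to $\Pi$ from \eqref{eqn:Pi}, verify $r=4$ via the entirety hypotheses \eqref{eqn:Lfns} together with the separation $|t_\chi|\geq Q^{-\epsilon}$, and bound $E(\Pi\times\tilde\Pi)\leq 4\log Q+2Q^{\epsilon}<(6/\epsilon)Q^{\epsilon}$ to land in the region \eqref{eqn:GHL_interval}. The paper compresses your cross-pair analysis into the single observation (cf.\ \cref{rem:E}) that the only poles of $D(s)$ away from $s=1$ are possible simple or double poles at $s=1\pm it_\chi$, but the content is identical.
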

\begin{proof}
We apply \cref{lem:GHLnew} to the situation described in \eqref{eqn:Pi} and \eqref{eqn:D_def}. By the initial assumptions, $D(s)$ has a quadruple pole at $s=1$, potential simple or double poles at $s=1\pm it_\chi$, and no other pole. Hence $r=4$ in \cref{lem:GHLnew}, while by $C(\Pi\times\tilde{\Pi})\leq Q^4$ and \eqref{eq:Edef} (see also \cref{rem:E}),
\[
E(\Pi\times\tilde{\Pi})\leq 4\log Q+2Q^\epsilon<(6/\epsilon)Q^{\epsilon}.
\]
The desired result follows (with room to spare).
\end{proof}

\begin{lemma}
\label{lem:GHL2}
Let $(\pi,\pi',\chi)\in\mathfrak{F}_n\times\mathfrak{F}_{n'}\times\mathfrak{F}_1$. Let $H(s)=H(s;\pi,\pi',\chi)$ be as in \eqref{eqn:D_def2}, and assume that $L(s,\pi\times(\pi'\otimes\chi))$ is entire. Put $\Cl[abcon]{GHLnew2}=1/(34(n+n'))$. Then $H(\sigma+it)$ has at most two zeros (counted with multiplicity) in the region
\[
\sigma\geq 1-\frac{\Cr{GHLnew2}}{\log(C(\pi)C(\pi'\otimes\chi))}
\qquad\text{and}\qquad
|t|\leq\frac{\Cr{GHLnew2}}{\log(C(\pi)C(\pi'\otimes\chi))}.
\]
\end{lemma}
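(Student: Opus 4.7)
The plan is to apply \cref{lem:GHLnew} to the isobaric representation $\Pi'=\pi\boxplus\tilde\pi'\otimes\bar\chi$ from \eqref{eqn:Pi2}, whose Rankin--Selberg square is exactly $H(s)$ by \eqref{eqn:D_def2}. Setting $\sigma:=\pi'\otimes\chi$, so that $\tilde\pi'\otimes\bar\chi=\tilde\sigma$, the factorization reads
\[
H(s)=L(s,\pi\times\tilde\pi)L(s,\sigma\times\tilde\sigma)L(s,\pi\times\sigma)L(s,\tilde\pi\times\tilde\sigma).
\]
By hypothesis $L(s,\pi\times\sigma)$ is entire, and so is $L(s,\tilde\pi\times\tilde\sigma)$ by \eqref{eqn:conjugate_symmetry}. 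The two diagonal factors each contribute a simple pole at $s=1$, so the pole order in \cref{lem:GHLnew} is $r=2$, and $H(s)$ has no poles off the real axis.

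Next, I would verify that the correction sum in \eqref{eq:Edef} defining $E(\Pi'\times\tilde\Pi')$ is empty. By \cref{rem:E} it records the upper half-plane poles of $H$, of which there are none. Equivalently, its sole potential nonzero term would require $\pi^*=\tilde\sigma^*$, which in turn would force $L(s,\pi\times\sigma)$ to have a pole at $s=1-i(t_\pi+t_\sigma)$, contradicting the hypothesis. Hence $E(\Pi'\times\tilde\Pi')=\log C(\Pi'\times\tilde\Pi')$. To close the argument, I would bound the conductor multiplicatively via
\[
C(\Pi'\times\tilde\Pi')=C(\pi\times\tilde\pi)C(\pi\times\sigma)C(\tilde\sigma\times\tilde\pi)C(\sigma\times\tilde\sigma),
\]
and apply \eqref{eqn:BH} to each factor, using $C(\pi),C(\sigma)\geq 3$ to absorb the archimedean constants, to obtain $E\leq 2(n+n')\log(C(\pi)C(\pi'\otimes\chi))$ up to a harmless adjustment. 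Substituting $r=2$ into \cref{lem:GHLnew}, the $|t|$-constraint $1/(12\sqrt 2 E)$ is the more restrictive of the two, and the inequality $24\sqrt 2<34$ then produces the stated value $\Cr{GHLnew2}=1/(34(n+n'))$.

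There is no substantive obstacle; the argument is parallel to the proof of \cref{lem:GHL}, with one fewer factor in the isobaric sum. The one conceptual point worth noting is that the single hypothesis that $L(s,\pi\times(\pi'\otimes\chi))$ is entire does double duty: it keeps the pole order at its minimum $r=2$, and it simultaneously excludes the potentially troublesome term $1/|t_\pi-t_{\tilde\sigma}|$ in $E$. Both consequences trace back to the forbidden equality $\pi^*=\tilde\sigma^*$.
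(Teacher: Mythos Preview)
Your proposal is correct and follows essentially the same approach as the paper: apply \cref{lem:GHLnew} to $\Pi'=\pi\boxplus\tilde\pi'\otimes\bar\chi$, use the entireness hypothesis to get $r=2$ and an empty correction sum in $E$, then bound $\log C(\Pi'\times\tilde\Pi')\leq 2(n+n')\log(C(\pi)C(\pi'\otimes\chi))$ via \eqref{eqn:BH}. One minor remark: the conductor bound from \eqref{eqn:BH} at $t=0$ reads $C(\pi\times\pi')\leq C(\pi)^{n'}C(\pi')^n$ directly, so no archimedean constants need absorbing and the inequality $E\leq 2(n+n')\log(C(\pi)C(\pi'\otimes\chi))$ holds on the nose.
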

\begin{proof}
We apply \cref{lem:GHLnew} to the situation described in \eqref{eqn:Pi2} and \eqref{eqn:D_def2}. By the initial assumptions, $H(s)$ has a double pole at $s=1$ and no other pole. Hence $r=2$ in \cref{lem:GHLnew}, while by \eqref{eq:Edef} (see also \cref{rem:E}) and \eqref{eqn:BH},
\[
E(\Pi'\times\tilde{\Pi}')=\log C(\Pi'\times\tilde{\Pi}')\leq 2(n+n')\log(C(\pi)C(\pi'\otimes\chi)).
\]
The desired result follows.
\end{proof}

We now use \cref{lem:GHLnew} to prove \cref{thm:standard_region}.

\begin{proof}[Proof of \cref{thm:standard_region}]
We begin by establishing the zero-free region \eqref{eqn:ZFR_standard_region}.  We distinguish between three cases depending on whether $\pi\otimes\chi^2=\pi$ or even $\pi\otimes\chi=\pi$.

\emph{Case 1:} $\pi\otimes\chi^2\neq\pi$ (which automatically implies $\pi\otimes\chi\neq\pi$).  Since $\chi\in\mathfrak{F}_1^*$, the $L$-functions
\[
L(s,\pi\times(\tilde{\pi}\otimes\chi)),\qquad L(s,\pi\times(\tilde{\pi}\otimes\chi^2))
\]
are entire. We write $s=\sigma+it$ and set out to prove that $L(s,\pi\times(\tilde\pi\otimes\chi))$ has no zero in the region \eqref{eqn:ZFR_standard_region}. We let $t\in\R$ and apply \cref{lem:GHLnew} to 
\begin{equation}
\label{specialPi}
\Pi''=\pi\boxplus\pi\otimes\chi|\cdot|^{it}\boxplus\pi\otimes\bar{\chi}|\cdot|^{-it}.
\end{equation}
By \eqref{eqn:Pi3} and \eqref{eqn:D_def3}, the corresponding auxiliary $L$-function equals
\begin{align}
\label{specialD}
L(s,\Pi''\times\tilde{\Pi}'')=
&~L(s,\pi\times\tilde\pi)^3 L(s+it,\pi\times(\tilde\pi\otimes\chi))^2 L(s-it,\tilde\pi\times(\pi\otimes\bar{\chi}))^2\\
\notag
&\cdot L(s+2it,\pi\times(\tilde\pi\otimes\chi^2)) L(s-2it,\tilde\pi\times(\pi\otimes\bar{\chi}^2)),
\end{align}
which has a triple pole at $s=1$ and no other pole. Hence $r=3$ in \cref{lem:GHLnew}, while by 
\eqref{eqn:BH}, \eqref{eq:Edef} (see also \cref{rem:E}), and \eqref{eqn:Rdef2},
\begin{align*}
E(\Pi''\times\tilde{\Pi}'')
&=\log C(\Pi''\times\tilde{\Pi}'')\\
&\leq 9\log R(\pi,\chi|\cdot|^{it},\bar{\chi}|\cdot|^{-it})\\
&\leq 9\log(C(\pi)^{2n}C(\chi)^{n^2}(|t|+3)^{n^2[F:\Q]}).
\end{align*}
By \eqref{eqn:conjugate_symmetry}, if $\sigma\in\R$ satisfies $L(\sigma+it,\pi\times(\tilde{\pi}\otimes\chi))=0$, then also $L(\sigma-it,\pi\times(\tilde{\pi}\otimes\bar{\chi}))=0$.  Therefore, $\sigma$ is a real zero of $L(s,\Pi''\times\tilde{\Pi}'')$ with multiplicity at least $4$.  This establishes the zero-free region \eqref{eqn:ZFR_standard_region} when $\pi\otimes\chi^2\neq\pi$, lest we contradict \cref{lem:GHLnew}.

\emph{Case 2:} $\pi\otimes\chi^2=\pi$ and $\pi\otimes\chi\neq\pi$. In this case, $\pi\otimes\chi=\pi\otimes\bar{\chi}$ differs from $\pi$, hence
$L(s,\pi\times(\tilde{\pi}\otimes\chi))$ is self-dual by \eqref{eqn:conjugate_symmetry} and entire.  We apply \cref{lem:GHL2} with $\pi'=\tilde\pi$. By \eqref{eqn:D_def2} and \cref{lem:GHL2}, the auxiliary $L$-function
\[H(s;\pi,\tilde\pi,\chi)=L(s,\pi\times\tilde\pi)^2L(s,\pi\times(\tilde{\pi}\otimes\chi))^2\]
has at most two zeros (counted with multiplicity) in the region
\begin{equation}
\label{eqn:low_region}
\sigma\geq 1-\frac{1}{68\log(C(\pi)^{2n}C(\chi)^{n^2})}
\qquad\text{and}\qquad
|t|\leq\frac{1}{68\log(C(\pi)^{2n}C(\chi)^{n^2})}.
\end{equation}
Since $L(s,\pi\times(\tilde{\pi}\otimes\chi))$ is a self-dual $L$-function, its non-real zeros come in complex conjugate pairs per \eqref{eqn:conjugate_symmetry}.  It follows that a non-real zero of $L(s,\pi\times(\tilde{\pi}\otimes\chi))$ in \eqref{eqn:low_region} gives rise to two distinct zeros of $H(s;\pi,\tilde\pi,\chi)$ in \eqref{eqn:low_region}, each with multiplicity at least two. Therefore, $L(s,\pi\times(\tilde{\pi}\otimes\chi))$ can only have real zeros in \eqref{eqn:low_region}. Similarly, a real zero of $L(s,\pi\times(\tilde{\pi}\otimes\chi))$ with multiplicity at least two cannot exist in \eqref{eqn:low_region}.  We conclude that $L(s,\pi\times(\tilde{\pi}\otimes\chi))$ 
has at most one zero (necessarily real and simple) in \eqref{eqn:low_region}. It remains to show that $L(s,\pi\times(\tilde{\pi}\otimes\chi))$ has no zero in the region \eqref{eqn:ZFR_standard_region} when
\begin{equation}
\label{tlowerbound}
|t|>\frac{1}{68\log(C(\pi)^{2n}C(\chi)^{n^2})}.
\end{equation}
Given such a $t\in\R$, we go back to our earlier construction \eqref{specialPi}. Then \eqref{specialD} simplifies to
\[
L(s,\Pi''\times\tilde{\Pi}'')=
L(s,\pi\times\tilde\pi)^3\prod_{\pm}L(s\pm it,\pi\times(\tilde\pi\otimes\chi))^2\prod_{\pm} L(s\pm 2it,\pi\times\tilde\pi).
\]
This auxiliary $L$-function has a triple pole at $s=1$, simple poles at $s=1\pm 2it$, and no other pole.  Hence $r=3$ in \cref{lem:GHLnew}, while by \eqref{eqn:BH}, \eqref{eq:Edef} (see also \cref{rem:E}), \eqref{eqn:Rdef2}, and \eqref{tlowerbound},
\begin{align*}
E(\Pi''\times\tilde{\Pi}'')
&=\log C(\Pi''\times\tilde{\Pi}'')+\frac{1}{2|t|}\\
&\leq 9\log R(\pi,\chi|\cdot|^{it},\bar{\chi}|\cdot|^{-it})+34\log(C(\pi)^{2n}C(\chi)^{n^2})\\
&\leq 43\log(C(\pi)^{2n}C(\chi)^{n^2}(|t|+3)^{n^2[F:\Q]}).
\end{align*}
Now, we finish as in Case 1. If $L(\sigma+it,\pi\times(\tilde{\pi}\otimes\chi))=0$, then $\sigma$ is a real zero of $L(s,\Pi''\times\tilde{\Pi}'')$ with multiplicity at least $4$.  This establishes the zero-free region \eqref{eqn:ZFR_standard_region} when $\pi\otimes\chi^2=\pi$ and $\pi\otimes\chi\neq\pi$.

\emph{Case 3:} $\pi\otimes\chi=\pi$ (which automatically implies $\pi\otimes\chi^2=\pi$). We apply \cref{lem:GHLnew} for $\Pi=\pi$, and then we also apply \eqref{eqn:BH}, to see that $L(s,\pi\times\tilde\pi)$ has at most one zero (counted with multiplicity) in the region
\[
\sigma\geq 1-\frac{1}{7\log(C(\pi)^{2n})}
\qquad\text{and}\qquad
|t|\leq\frac{1}{12\log(C(\pi)^{2n})}.
\]
Since $L(s,\pi\times\tilde{\pi})$ is a self-dual $L$-function, its non-real zeros come in complex conjugate pairs per \eqref{eqn:conjugate_symmetry}.  So the potential single zero of $L(s,\pi\times\tilde{\pi})$ in the region above must be real. It remains to show that $L(s,\pi\times\tilde{\pi})$ has no zero in the region \eqref{eqn:ZFR_standard_region} when
\begin{equation}
\label{tlowerbound2}
|t|>\frac{1}{12\log(C(\pi)^{2n})}.
\end{equation}
Given such a $t\in\R$, we go back to our earlier construction \eqref{specialPi}. Then \eqref{specialD} simplifies to
\[
L(s,\Pi''\times\tilde{\Pi}'')=
L(s,\pi\times\tilde\pi)^3\prod_{\pm}L(s\pm it,\pi\times\tilde\pi)^2\prod_{\pm} L(s\pm 2it,\pi\times\tilde\pi).
\]
This auxiliary $L$-function has a triple pole at $s=1$, double poles at $s=1\pm it$, simple poles at $s=1\pm 2it$, and no other pole.  Hence $r=3$ in \cref{lem:GHLnew}, while by \eqref{eqn:BH}, \eqref{eq:Edef} (see also \cref{rem:E}), \eqref{eqn:Rdef2}, and \eqref{tlowerbound2},
\begin{align*}
E(\Pi''\times\tilde{\Pi}'')
&=\log C(\Pi''\times\tilde{\Pi}'')+\frac{5}{2|t|}\\
&\leq 9\log R(\pi,|\cdot|^{it},|\cdot|^{-it})+30\log(C(\pi)^{2n})\\
&\leq 39\log(C(\pi)^{2n}(|t|+3)^{n^2[F:\Q]}).
\end{align*}
Now, we finish as in Case 1. If $L(\sigma+it,\pi\times\tilde{\pi})=0$, then $\sigma$ is a real zero of $L(s,\Pi''\times\tilde{\Pi}'')$ with multiplicity at least $4$.  This establishes the zero-free region \eqref{eqn:ZFR_standard_region} when $\pi\otimes\chi=\pi$.

We have established the zero-free region \eqref{eqn:ZFR_standard_region}, and we proceed to prove the rest of \cref{thm:standard_region}.  Let $(\pi,\chi_1,\chi_2)\in\mathfrak{F}_n\times\mathfrak{F}_1^*\times\mathfrak{F}_1^*$ satisfy $\pi\otimes\chi_1\neq\pi\otimes\chi_2$, and assume for the sake of contradiction that $L(s,\pi\times(\tilde{\pi}\otimes\chi_1))L(s,\pi\times(\tilde{\pi}\otimes\chi_2))$ has at least two zeros (counted with multiplicity) in the interval \eqref{twobetasinterval}.  We apply \cref{lem:GHLnew} to the situation described in \eqref{eqn:Pi3} and \eqref{eqn:D_def3}.

If neither $\pi\otimes\chi_1$ nor $\pi\otimes\chi_2$ equals $\pi$, then by $\pi\otimes\chi_1\neq\pi\otimes\chi_2$ and \eqref{eqn:D_def3}, the only singularity of $L(s,\Pi''\times\tilde{\Pi}'')$ is a triple pole at $s=1$.  Therefore, we have $r=3$ in \cref{lem:GHLnew}.  Furthermore, by \eqref{eq:Edef} (see also \cref{rem:E}) and \eqref{eqn:Rdef2},
\begin{equation}
\label{Ebound}
E(\Pi''\times\tilde{\Pi}'')=\log C(\Pi''\times\tilde{\Pi}'')\leq 9\log(C(\pi)^{2n}C(\chi_1)^{n^2/2}C(\chi_2)^{n^2/2}).
\end{equation}
By \eqref{eqn:conjugate_symmetry} and our indirect assumption, $L(s,\Pi''\times\tilde{\Pi}'')$ has at least four zeros (counted with multiplicity) in the interval \eqref{twobetasinterval}.  This contradicts \cref{lem:GHLnew} (with room to spare).

If $\pi\otimes\chi_2=\pi$ (say), then we argue similarly.  In this case, \eqref{eqn:D_def3} simplifies to
\[
L(s,\Pi''\times\tilde{\Pi}'')=L(s,\pi\times\tilde{\pi})^5 L(s,\pi\times(\tilde{\pi}\otimes\chi_1))^2 L(s,\pi\times(\tilde{\pi}\otimes\bar{\chi}_1))^2.
\]
We now have $r=5$ in \cref{lem:GHLnew}, while the bound \eqref{Ebound} is still valid. By \eqref{eqn:conjugate_symmetry} and our indirect assumption, $L(s,\Pi''\times\tilde{\Pi}'')$ has at least eight zeros (counted with multiplicity) in the interval \eqref{twobetasinterval}. This again contradicts \cref{lem:GHLnew} (with room to spare).
\end{proof}

\section{Key propositions}

Our proof of \cref{thm:Tatuzawa} rests on a more uniform version of \cite[Proposition~4.1]{HarcosThorner}.

\begin{proposition}
\label{prop:P1}
Let $(\pi,\pi',\chi)\in\mathfrak{F}_n\times\mathfrak{F}_{n'}\times\mathfrak{F}_1$ and $\epsilon\in(0,1)$.  Recall the notation in \eqref{eqn:pidecomp} and \eqref{eqn:Rdef}.  Assume that
\begin{itemize}
	\item $L(s,\pi\times\pi')$ is entire and has a zero in the interval $[1-\epsilon/16,1)$,
	\item $L(s,\pi\times(\pi'\otimes\chi))$ is entire, and
	\item if $\pi\otimes\chi^*=\pi$ or $\pi'\otimes\chi^*=\pi'$, then $|t_{\chi}|\geq Q^{-\epsilon/64}$.
\end{itemize}
There exists an effectively computable constant $\Cl[abcon]{P1}=\Cr{P1}(n,n',[F:\Q],\epsilon)>0$ such that
\begin{equation}
\label{ZFR_beta}
L(\sigma,\pi\times(\pi'\otimes\chi))\neq 0,\qquad \sigma\geq 1-\Cr{P1}Q^{-\epsilon}.
\end{equation}
\end{proposition}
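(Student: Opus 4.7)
The plan is to construct the auxiliary isobaric Rankin--Selberg $L$-function
\[
D(s)=D(s;\pi,\pi',\chi)=L(s,\Pi\times\tilde\Pi),\qquad \Pi=\pi\boxplus\pi\otimes\chi\boxplus\tilde\pi'\boxplus\tilde\pi'\otimes\bar\chi,
\]
from \eqref{eqn:Pi}--\eqref{eqn:D_def}, and exploit its nonnegative Dirichlet coefficients via \cref{lem:nonneg}. Under the proposition's hypotheses, and provided also that $L(s,\pi\times(\pi'\otimes\chi^2))$ is entire, $D(s)$ has a quadruple pole at $s=1$, possible simple or double poles at $s=1\pm it_\chi$ (which only arise in the degenerate cases $\pi\otimes\chi^*=\pi$ or $\pi'\otimes\chi^*=\pi'$ and are pinned away from $s=1$ by the hypothesis $|t_\chi|\ge Q^{-\epsilon/64}$), and no other poles. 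If instead $L(s,\pi\times(\pi'\otimes\chi^2))$ has a pole, the rigid identity $\pi'\otimes\chi^2=\tilde\pi\cdot|\cdot|^{it}$ holds for a specific $t\in\R$, determining $\chi$ modulo the group of shift characters; in this exceptional branch, \eqref{ZFR_beta} follows from the standard zero-free region for $H(s;\pi,\pi',\chi)$ produced by \cref{lem:GHL2}, which is stronger than \eqref{ZFR_beta} once $Q$ is large.

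Returning to the main branch, the factorization \eqref{eqn:D_def} shows that the assumed zero $\beta_1\in[1-\epsilon/16,1)$ of $L(s,\pi\times\pi')$ contributes multiplicity at least $2$ to $D(s)$ at $s=\beta_1$ (from $L(s,\pi\times\pi')$ paired with its conjugate dual $L(s,\tilde\pi\times\tilde\pi')$), while a hypothetical zero $\beta_2\ge 1-\Cr{P1}Q^{-\epsilon}$ of $L(s,\pi\times(\pi'\otimes\chi))$ would contribute multiplicity at least $4$ to $D(s)$ at $s=\beta_2$ (from the squared factors and their conjugate duals). When $Q$ is bounded by an effectively computable constant $Q_0=Q_0(n,n',[F:\Q],\epsilon)$, both zeros fit inside the box furnished by \cref{lem:GHL} (applied with parameter $\epsilon/64$, permissible since $|t_\chi|\ge Q^{-\epsilon/64}\ge Q^{-\epsilon/64}$), and the total multiplicity $2+4=6$ exceeds the allowed $r=4$, yielding an immediate contradiction. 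This handles the regime $Q\le Q_0$ after absorbing the trivial cases into $\Cr{P1}$.

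For $Q>Q_0$ the box of \cref{lem:GHL} no longer contains $\beta_1$, and a more refined analysis is required. The plan is to invoke the nonnegativity inequality from the proof of \cref{lem:GHLnew} (essentially \cite[Lemma~3.1]{Wattanawanichkul}), which, after bounding $\log C(\Pi\times\tilde\Pi)\ll\log Q$ and the degenerate $|t_\chi|^{-1}\le Q^{\epsilon/64}$ term, yields, for $\sigma\in(1,2)$,
\[
\frac{2}{\sigma-\beta_1}+\frac{4}{\sigma-\beta_2}\;\le\;\frac{4}{\sigma-1}+O_\epsilon\!\bigl(Q^{\epsilon/64}\bigr).
\]
The main obstacle, and the crux of the argument, is the optimization of $\sigma$ in this inequality: the $\beta_1$ contribution is bounded by $32/\epsilon$ independently of $Q$, while the error grows like $Q^{\epsilon/64}$, so no single-point choice works crudely. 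The plan is to choose $\sigma$ at a scale intermediate between $Q^{-\epsilon}$ and $\epsilon/16$ and to exploit the near-cancellation between the pole term $4/(\sigma-1)$ and the multiplicity-$4$ zero term $4/(\sigma-\beta_2)$; the residual mismatch, which depends quadratically on $1-\beta_2$, is then controlled by the multiplicity-$2$ contribution at $\beta_1$, forcing $1-\beta_2\ge \Cr{P1}Q^{-\epsilon}$ for an effectively computable $\Cr{P1}=\Cr{P1}(n,n',[F:\Q],\epsilon)>0$ and yielding \eqref{ZFR_beta}.
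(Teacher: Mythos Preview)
Your proposal has genuine gaps in both branches.

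\textbf{Exceptional branch.} You claim that when $L(s,\pi\times(\pi'\otimes\chi^2))$ has a pole, \cref{lem:GHL2} applied to $H(s;\pi,\pi',\chi)$ yields \eqref{ZFR_beta}. But \cref{lem:GHL2} only asserts that $H$ has \emph{at most two} zeros in the box; a real zero $\beta_\chi$ of $L(s,\pi\times(\pi'\otimes\chi))$ contributes multiplicity exactly $2$ to $H$ (via the conjugate-dual factor), so no contradiction arises. The paper instead exploits the identity $\pi'\otimes\chi^2=\tilde\pi\otimes|\cdot|^{it}$ to rewrite $L(s,\pi\times(\pi'\otimes\chi))=L(s,\pi\times(\tilde\pi\otimes\kappa))$ with $\kappa=\bar\chi|\cdot|^{it}$, and then applies \cref{thm:standard_region}. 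The crucial point is that $L(s,\pi\times(\tilde\pi\otimes\kappa^2))=L(s+it,\pi\times\pi')$ is entire by hypothesis, whence $\pi\otimes\kappa^2\neq\pi$ and the exceptional zero in \cref{thm:standard_region} is ruled out.

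\textbf{Main branch.} The log-derivative inequality you write down is
\[
\frac{2}{\sigma-\beta_1}+\frac{4}{\sigma-\beta_2}\leq\frac{4}{\sigma-1}+O_\epsilon(Q^{\epsilon/64}),
\]
and after the ``near-cancellation'' this becomes
\[
\frac{2}{\sigma-\beta_1}-O_\epsilon(Q^{\epsilon/64})\leq\frac{4(1-\beta_2)}{(\sigma-1)(\sigma-\beta_2)}.
\]
To extract anything about $1-\beta_2$ you need the left side to be positive. But the hypothesis only places $\beta_1$ in $[1-\epsilon/16,1)$; when $1-\beta_1$ is of order $\epsilon/16$ (which is the generic situation), we have $2/(\sigma-\beta_1)\leq 32/\epsilon$ for every $\sigma>1$, and this is dominated by the $Q^{\epsilon/64}$ error once $Q$ is large. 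No intermediate scale for $\sigma$ rescues this: the left side never exceeds $32/\epsilon$. The paper's argument is \emph{not} a refined optimisation of this inequality; it is the Goldfeld method. One evaluates the smoothed sum $\sum_\ka\lambda_D(\ka)\N\ka^{-\beta}e^{-\N\ka/x}$ by contour shifting, and the key input is \cref{lem:residue_bounds}: the total residue contribution is $\ll (1-\beta)^{-4}|L(1,\pi\times(\pi'\otimes\chi))|(Qx)^{\epsilon/8}$. Balancing against the convexity bound on the line $\Re(s)=1/2$ gives
\[
|L(1,\pi\times(\pi'\otimes\chi))|\gg_{n,n',[F:\Q],\epsilon}(1-\beta)^4 Q^{-\epsilon/2}.
\]
Only then does \cref{lem:GHL} enter: if $\beta_\chi$ is a real zero of $L(s,\pi\times(\pi'\otimes\chi))$ in the box \eqref{eqn:GHL_interval} (with $\epsilon/64$), it forces $\beta$ out of that box, i.e.\ $1-\beta\gg_\epsilon Q^{-\epsilon/64}$. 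Finally the mean value theorem,
\[
|L(1,\pi\times(\pi'\otimes\chi))|\leq(1-\beta_\chi)\sup_{[\beta_\chi,1]}|L'(\sigma,\pi\times(\pi'\otimes\chi))|\ll_{n,n',[F:\Q],\epsilon}(1-\beta_\chi)Q^{\epsilon/3},
\]
converts the $L(1)$ lower bound into the desired lower bound on $1-\beta_\chi$. The appearance of $|L(1,\pi\times(\pi'\otimes\chi))|$ as a factor of the residue (via \cref{lem:residue,lem:residue_bounds}) is the mechanism that the log-derivative inequality lacks.
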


We shall also use the following simpler variant of \cref{prop:P1}.

\begin{proposition}
\label{prop:P2}
Let $(\pi,\pi',\chi)\in\mathfrak{F}_n\times\mathfrak{F}_{n'}\times\mathfrak{F}_1$ and $\epsilon\in(0,1)$.  Recall the notation in \eqref{eqn:Rdef}.  Assume that
\begin{itemize}
	\item $L(s,\pi\times\tilde\pi)$ has a zero in the interval $[1-\epsilon/16,1)$, and
	\item $L(s,\pi\times(\pi'\otimes\chi))$ is entire.
\end{itemize}
There exists an effectively computable constant $\Cl[abcon]{P2}=\Cr{P2}(n,n',[F:\Q],\epsilon)>0$ such that
\begin{equation}
\label{ZFR_beta2}
L(\sigma,\pi\times(\pi'\otimes\chi))\neq 0,\qquad \sigma\geq 1-\Cr{P2}Q^{-\epsilon}.
\end{equation}
\end{proposition}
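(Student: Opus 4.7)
The plan is to follow the template of the proof of \cref{prop:P1}, but with the order-$2$ auxiliary $L$-function $H(s)=H(s;\pi,\pi',\chi)$ from \eqref{eqn:Pi2}--\eqref{eqn:D_def2} in place of the order-$4$ function $D(s)$. This substitution is natural because the assumed zero $\beta_0\in[1-\epsilon/16,1)$ lies on $L(s,\pi\times\tilde\pi)$, which is one of the four Rankin--Selberg factors of $H(s)$. The key features of $H(s)$ are: nonnegative Dirichlet coefficients by \cref{lem:nonneg}; analytic conductor bounded by $Q$ as in \eqref{eqn:Rdef}; under the hypothesis that $L(s,\pi\times(\pi'\otimes\chi))$ is entire, the factorization \eqref{eqn:D_def2} forces $H(s)$ to be holomorphic on $\CC$ away from a double pole at $s=1$; and $H(\beta_0)=0$.

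Proceed by contradiction. Suppose $L(\sigma,\pi\times(\pi'\otimes\chi))=0$ for some real $\sigma\in[1-\Cr{P2}Q^{-\epsilon},1)$. Then by \eqref{eqn:conjugate_symmetry}, $L(\sigma,\tilde\pi\times(\tilde\pi'\otimes\bar\chi))=0$ as well, so \eqref{eqn:D_def2} forces $H(s)$ to have a zero of order at least $2$ at $s=\sigma$. Writing $H(s)=\sum_{\ka}\lambda(\ka)\N\ka^{-s}$ with $\lambda(\ka)\geq 0$, form
\[
I(x)=\sum_{\ka}\lambda(\ka)\,\phi(\N\ka/x)=\frac{1}{2\pi i}\int_{(2)}H(s)\hat{\phi}(s)x^s\,ds
\]
for a nonnegative smooth compactly supported weight $\phi$. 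Shifting the contour to $\Re(s)=1/2$ yields $I(x)=\mathrm{Res}_{s=1}\bigl(H(s)\hat\phi(s)x^s\bigr)+O(Q^{O(1)}x^{1/2})$ via the convexity bound from \cref{lem:Li1}.

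The Goldfeld--Hoffstein step is to tailor $\hat\phi$ so that, after using $H(\beta_0)=0$ and $H(\sigma)=0$ (with multiplicity) together with the Taylor expansion of $(s-1)^2 H(s)$ around $s=1$, the residue at $s=1$ reduces to a quantity comparable to $(1-\beta_0)(1-\sigma)^2 x\log x$ times conductor-controlled factors. Choosing $x=Q^{C/\epsilon}$ with sufficiently large $C=C(n,n',[F:\Q])$ dominates the tail error, and combined with the a priori bound $1-\beta_0\leq\epsilon/16$, the nonnegativity $I(x)\geq 0$ forces $(1-\sigma)^2\gg_\epsilon Q^{-2\epsilon}$, contradicting the choice of $\sigma$ for $\Cr{P2}$ small enough. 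The main obstacle will be engineering $\phi$ so that positivity survives the construction, the dependence on $\beta_0$ comes out to first order as $1-\beta_0$, and the final exponent in the bound on $1-\sigma$ matches $\epsilon$ rather than a larger multiple, mirroring the analogous delicate calibration in the proof of \cref{prop:P1}.
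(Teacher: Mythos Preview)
Your identification of $H(s)=H(s;\pi,\pi',\chi)$ as the right auxiliary object is correct, and the opening paragraph matches the paper.  From the second paragraph on, however, the plan diverges from the paper's argument and leaves a genuine gap.

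The paper does \emph{not} attempt to bound $1-\sigma$ directly from a single contour shift.  It proceeds in three steps.  First, using the weight $\N\ka^{-\beta}e^{-\N\ka/x}$ (so the Mellin kernel is $\Gamma(s-\beta)$, not the transform of a compactly supported $\phi$), the trivial term $\ka=\cO_F$ already gives $I(x)\geq 1/e$, and the residue at the double pole $s=1$ is bounded via \cref{lem:residue} by $(1-\beta)^{-2}|L(1,\pi\times(\pi'\otimes\chi))|(Qx)^{\epsilon/8}$.  Optimizing $x$ yields
\[
|L(1,\pi\times(\pi'\otimes\chi))|\gg_{n,n',[F:\Q],\epsilon}(1-\beta)^{2}Q^{-\epsilon/4}.
\]
Second---and this is the step your sketch omits entirely---one invokes the zero-repulsion \cref{lem:GHL2}: if $L(s,\pi\times(\pi'\otimes\chi))$ has a real zero $\beta_\chi$ in the interval \eqref{niceinterval}, then by \eqref{eqn:conjugate_symmetry} this is a double zero of $H(s)$ there, which forces $\beta$ \emph{out} of that interval and hence supplies a \emph{lower} bound $1-\beta\gg 1/\log(C(\pi)C(\pi'\otimes\chi))$.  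Third, the mean-value inequality $|L(1,\pi\times(\pi'\otimes\chi))|\leq(1-\beta_\chi)\sup|L'|$ together with \cref{lem:Li1} finishes the argument.

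Your proposed route has two concrete problems.  With a compactly supported $\phi$, the trivial term $\ka=\cO_F$ contributes nothing for large $x$, so ``nonnegativity $I(x)\geq 0$'' is the only lower bound available, and that is not strict; there is nothing to contradict.  More seriously, the claim that one can ``tailor $\hat\phi$'' so that the residue at $s=1$ becomes comparable to $(1-\beta_0)(1-\sigma)^2 x\log x$ is asserted but not constructed---you yourself flag it as ``the main obstacle.''  The residue at $s=1$ is governed by the Laurent expansion of $H$ there and by $\hat\phi(1),\hat\phi'(1)$; the zeros $\beta_0$ and $\sigma$ do not enter unless $\hat\phi$ is built to have poles at those points, and then $\phi$ is no longer smooth and compactly supported.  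In fact the standard kernel $\Gamma(s-\beta_0)$ produces a \emph{negative} power $(1-\beta_0)^{-2}$ in the residue, not the positive power you posit; this is precisely why the paper needs the separate repulsion step to furnish a lower bound on $1-\beta_0$.  The a~priori upper bound $1-\beta_0\leq\epsilon/16$ that you cite points in the wrong direction for that purpose.  In short, \cref{lem:GHL2} is not optional here, and without it (or a genuine substitute) the sketch does not close.
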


We will prove Propositions~\ref{prop:P1} and \ref{prop:P2} at the end of this section. These proofs rely on a residue calculation facilitated by the following lemma borrowed from \cite{HarcosThorner}.

\begin{lemma}[{\cite[Lemma~5.2]{HarcosThorner}}]
\label{lem:residue}
Let $f_0(s),\dotsc,f_m(s)$ be $m+1$ complex functions that are holomorphic in an open neighborhood of $s_0\in\CC$. If there exists $c\geq 0$ such that $|f_j(s_0)|=c$ for all $j\in\{1,\dotsc,m\}$, then
\[
\mathop{\mathrm{Res}}_{s=s_0}\frac{f_0(s)\dotsb f_m(s)}{(s-s_0)^m}
\]
equals $c$ times a $\CC$-linear combination of monomials of the derivative values $f_j^{(k)}(s_0)$ for $(j,k)\in\{0,\dotsc,m\}\times\{0,\dotsc,m-1\}$. The monomials in the linear combination, and the modulus of each coefficient in the linear combination depend at most on $m$.
\end{lemma}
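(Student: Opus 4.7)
The plan is to compute the residue directly as a Taylor coefficient via the generalized Leibniz rule and then extract a factor of $c$ from each resulting monomial by a short pigeonhole argument on the multi-indices.

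Because $f_0(s)\cdots f_m(s)$ is holomorphic at $s_0$, the pole at $s_0$ has order at most $m$ and the residue equals $\frac{1}{(m-1)!}(f_0\cdots f_m)^{(m-1)}(s_0)$. Applying the general Leibniz rule expands this as
\[
\frac{1}{(m-1)!}\sum_{\substack{(k_0,\dotsc,k_m)\in\Z_{\geq 0}^{m+1}\\k_0+\cdots+k_m=m-1}}\binom{m-1}{k_0,\dotsc,k_m}\prod_{j=0}^{m} f_j^{(k_j)}(s_0).
\]
This already exhibits the residue as a $\CC$-linear combination of monomials in the derivative values $f_j^{(k)}(s_0)$ with $(j,k)\in\{0,\dotsc,m\}\times\{0,\dotsc,m-1\}$, whose indexing set and coefficients depend only on $m$.

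The key step is a pigeonhole observation: $m+1$ nonnegative integers summing to only $m-1$ cannot all satisfy $k_1,\dotsc,k_m\geq 1$, so each multi-index contains some $j^*\in\{1,\dotsc,m\}$ with $k_{j^*}=0$. The factor $f_{j^*}(s_0)$ then appears in the monomial with modulus exactly $c$ by hypothesis. If $c=0$, the whole term vanishes and the identity holds trivially; if $c>0$, write $f_{j^*}(s_0)=c\,u$ with $|u|=1$, absorb $u$ into the multinomial coefficient (the new coefficient still has modulus bounded purely in terms of $m$), and keep the remaining factor $\prod_{j\neq j^*}f_j^{(k_j)}(s_0)$ as a monomial of the required shape.

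Summing over multi-indices yields the claimed representation. The argument is essentially combinatorial, and I do not expect any substantial obstacle; the only real work is the bookkeeping to verify that the finite set of monomials that arises and the bounds on the moduli of the coefficients depend only on $m$, and not on the specific functions or on the value of $c$.
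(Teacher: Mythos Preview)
Your argument is correct. The residue is indeed the $(m-1)$-st Taylor coefficient of $f_0\cdots f_m$ at $s_0$, the Leibniz expansion is as you wrote, and the pigeonhole step (that among $k_1,\dotsc,k_m\geq 0$ with $k_0+\cdots+k_m=m-1$ at least one of $k_1,\dotsc,k_m$ vanishes) is exactly what produces the factor of $c$. One small point worth making explicit in the bookkeeping you allude to: fix the choice of $j^*$ deterministically (e.g.\ the least index in $\{1,\dotsc,m\}$ with $k_{j^*}=0$), so that the resulting monomial $\prod_{j\neq j^*}f_j^{(k_j)}(s_0)$ is determined by the multi-index alone; this is what guarantees that the finite set of monomials depends only on $m$.

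As for comparison with the paper: this lemma is not proved in the present paper at all---it is quoted verbatim from \cite[Lemma~5.2]{HarcosThorner} without proof. Your Leibniz--pigeonhole argument is the natural direct proof and is presumably close to what appears in the cited source.
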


We apply \cref{lem:residue} to study the residues of $D(s)$ and $H(s)$.  The information we need about the residues of $D(s)$ is as follows.

\begin{lemma}
\label{lem:residue_bounds}
Let $(\pi,\pi',\chi)\in\mathfrak{F}_n\times\mathfrak{F}_{n'}\times\mathfrak{F}_1$, $x>1$, $\epsilon\in(0,1)$, and $\beta\in[1-\epsilon/2,1)$. Recall the notation in  \eqref{eqn:pidecomp}, \eqref{eqn:D_def}, and \eqref{eqn:Rdef}.  Let $\mathcal{S}$ be the set of poles of $D(s)=D(s;\pi,\pi',\chi)$. Assume that the $L$-functions in \eqref{eqn:Lfns} are entire.  If $\pi\otimes\chi^*=\pi$ or $\pi'\otimes\chi^*=\pi'$, then assume also that $|t_\chi|\geq Q^{-\epsilon/8}$.  There holds
\[
\sum_{s_0\in\mathcal{S}}\mathop{\mathrm{Res}}_{s=s_0}D(s)x^{s-\beta}\Gamma(s-\beta)
\ll_{n,n',[F:\Q],\epsilon}(1-\beta)^{-4}|L(1,\pi\times(\pi'\otimes\chi))|(Qx)^{\epsilon}.	
\]
\end{lemma}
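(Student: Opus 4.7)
The plan is to identify the poles of $D(s)$ under the stated hypotheses and then bound the residue at each pole by combining \cref{lem:residue} with \cref{lem:Li1}. Under the assumption that the three $L$-functions in \eqref{eqn:Lfns} (and, via \eqref{eqn:conjugate_symmetry}, their contragredients) are entire, the factorization \eqref{eqn:D_def} shows that the only potentially singular factors of $D(s)$ are $L(s, \pi \times \tilde\pi)^2 L(s, \pi' \times \tilde\pi')^2$, giving a pole of order exactly $4$ at $s = 1$, together with the four simple factors $L(s, \pi^{(\prime)} \times (\tilde\pi^{(\prime)} \otimes \chi^{\pm 1}))$, giving simple poles at $s = 1 \pm it_\chi$ only when $\pi \otimes \chi^* = \pi$ or $\pi' \otimes \chi^* = \pi'$. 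In the latter case, the assumption $|t_\chi| \geq Q^{-\epsilon/8}$ guarantees separation from $s = 1$, so the three potential poles can be analyzed independently.

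For the main contribution at $s = 1$, I would write $D(s) = (s-1)^{-4} M(s) L(s, \pi \times (\pi'\otimes\chi))^2 L(s, \tilde\pi \times (\tilde\pi'\otimes\bar\chi))^2$ with $M(s)$ holomorphic at $s = 1$, and then apply \cref{lem:residue} with $m = 4$, choosing $f_0(s) = M(s) x^{s-\beta}\Gamma(s-\beta)$, $f_1 = f_2 = L(s, \pi \times (\pi'\otimes\chi))$, and $f_3 = f_4 = L(s, \tilde\pi \times (\tilde\pi'\otimes\bar\chi))$. The crucial point is that by \eqref{eqn:conjugate_symmetry} all four $f_j$'s satisfy $|f_j(1)| = |L(1, \pi \times (\pi'\otimes\chi))|$, so the factor $c$ from \cref{lem:residue} equals $|L(1, \pi \times (\pi'\otimes\chi))|$ exactly. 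The resulting monomials in $f_j^{(k)}(1)$ for $0 \leq k \leq 3$ would be bounded by applying \cref{lem:Li1} to the $L$-function factors (since each relevant $\mathcal{L}$ equals the corresponding $L$, giving $(Qx)^\epsilon$ per derivative), by the elementary estimate $|\Gamma^{(k)}(1-\beta)| \ll (1-\beta)^{-k-1}$ for $0\leq k\leq 3$ (which generates the $(1-\beta)^{-4}$ factor from the top derivative), and by $x^{1-\beta} \leq x^\epsilon$ since $1-\beta \leq \epsilon/2$. Any surplus powers of $c$ appearing in the monomials are absorbed into $(Qx)^{O(\epsilon)}$ using the convexity bound $c \ll (Qx)^\epsilon$, with a mild rescaling of the internal $\epsilon$.

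For the auxiliary residues at $s = 1 \pm it_\chi$, the pole order is at most $2$ and the gamma factor is bounded by $|s_0-\beta|^{-1} \leq |t_\chi|^{-1} \leq Q^{\epsilon/8}$ rather than blowing up like $(1-\beta)^{-1}$, so no $(1-\beta)^{-4}$ blow-up occurs. The self-twist hypothesis $\pi^{(\prime)} \otimes \chi^* = \pi^{(\prime)}$ forces $L(s, \pi \times (\pi'\otimes\chi))$ to coincide with a shift of the entire $L$-function $L(s, \pi \times \pi')$ or $L(s, \pi \times \tilde\pi)$, and a parallel analysis using \cref{lem:residue} with $m \leq 2$ and \cref{lem:Li1} produces a bound $\ll |L(1, \pi \times (\pi'\otimes\chi))| (Qx)^{O(\epsilon)}$; since $(1-\beta)^{-4} \geq 1$, these contributions are subsumed by the main term. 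The hard part will be precisely this third step: the natural scale appearing in the auxiliary residues is $|L(s_0, \pi \times (\pi'\otimes\chi))|$ at $s_0 = 1 \pm it_\chi$, and relating it back to $|L(1, \pi \times (\pi'\otimes\chi))|$ requires carefully exploiting the shift identities forced by $\pi^{(\prime)} \otimes \chi^* = \pi^{(\prime)}$, the separation $|t_\chi| \geq Q^{-\epsilon/8}$, and the convexity estimates of \cref{lem:Li1}, combined with a final rescaling of the implicit $\epsilon$.
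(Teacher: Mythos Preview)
Your treatment of the pole at $s_0=1$ is correct and agrees with the paper's argument. The gap lies in the handling of $s_0=1\pm it_\chi$. You yourself flag that applying \cref{lem:residue} with the same choices $f_j=L(s,\pi\times(\pi'\otimes\chi))$, $L(s,\tilde\pi\times(\tilde\pi'\otimes\bar\chi))$ would produce the scale $c=|L(1\pm it_\chi,\pi\times(\pi'\otimes\chi))|$ rather than $|L(1,\pi\times(\pi'\otimes\chi))|$, and you propose to ``relate it back'' via shift identities and convexity. That cannot work: \cref{lem:Li1} supplies only upper bounds on $L$-values, so there is no mechanism for replacing $|L(1\pm it_\chi,\pi\times(\pi'\otimes\chi))|$ by $|L(1,\pi\times(\pi'\otimes\chi))|$ in an upper bound without an a~priori lower bound on the former. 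Concretely, under either self-twist hypothesis one has $L(s,\pi\times(\pi'\otimes\chi))=L(s+it_\chi,\pi\times\pi')$, so
\[
|L(1-it_\chi,\pi\times(\pi'\otimes\chi))|=|L(1,\pi\times\pi')|,\qquad
|L(1+it_\chi,\pi\times(\pi'\otimes\chi))|=|L(1+2it_\chi,\pi\times\pi')|,
\]
neither of which bears any usable relation to $|L(1,\pi\times(\pi'\otimes\chi))|=|L(1+it_\chi,\pi\times\pi')|$. (Your aside that $L(s,\pi\times(\pi'\otimes\chi))$ might be a shift of $L(s,\pi\times\tilde\pi)$ is a slip; it is always a shift of $L(s,\pi\times\pi')$.)

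The paper's resolution is to choose \emph{different} factors $f_1,\dotsc,f_m$ at the poles $s_0=1\pm it_\chi$. Instead of the four factors on the first line of \eqref{eqn:D_def}, one draws the $f_j$'s from the second and third lines: for $s_0=1+it_\chi$ take $f_1(s)=L(s,\pi\times\pi')$ (and, when the pole is double, $f_2(s)=L(s,\tilde\pi\times(\tilde\pi'\otimes\bar\chi^2))$); for $s_0=1-it_\chi$ take $f_1(s)=L(s,\tilde\pi\times\tilde\pi')$ (and $f_2(s)=L(s,\pi\times(\pi'\otimes\chi^2))$ in the double-pole case). The shift identity and \eqref{eqn:conjugate_symmetry} then give, e.g.,
\[
|f_1(1+it_\chi)|=|L(1+it_\chi,\pi\times\pi')|=|L(1,\pi\times(\pi'\otimes\chi))|
\]
\emph{directly}, so the common value $c$ in \cref{lem:residue} is exactly $|L(1,\pi\times(\pi'\otimes\chi))|$ with no conversion needed. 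After that, the rest of your outline (bounding derivatives via \cref{lem:Li1}, the gamma estimate, and $x^{1-\beta}\leq x^{\epsilon/2}$) goes through as you describe.
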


\begin{proof}
First, assume that $\pi\otimes\chi^*=\pi$ and $\pi'\otimes\chi^*=\pi'$. Then $|t_{\chi}|\geq Q^{-\epsilon/8}$ by hypothesis, and $\mathcal{S}=\{1,1-it_\chi,1+it_\chi\}$. For each choice of $s_0\in\mathcal{S}$, let $m$ be the order of the pole of $D(s)$ at $s=s_0$. Specifically, $m=4$ for $s_0=1$, and $m=2$ for $s_0=1\pm it_\chi$. Consider the decomposition
\begin{equation}
\label{eqn:decomposition}
(s-s_0)^m D(s)x^{s-\beta}\Gamma(s-\beta)=f_0(s)\dotsb f_m(s),
\end{equation}
where
\begin{itemize}
\item $f_1(s)=f_2(s)=L(s+it_\chi,\pi\times\pi')$ and $f_3(s)=f_4(s)=L(s-it_\chi,\tilde{\pi}\times\tilde{\pi}')$ for $s_0=1$;
\item $f_1(s)=L(s,\tilde\pi\times\tilde\pi')$ and $f_2(s)=L(s+2it_\chi,\pi\times\pi')$ for $s_0=1-it_\chi$;
\item $f_1(s)=L(s,\pi\times\pi')$ and $f_2(s)=L(s-2it_\chi,\tilde{\pi}\times\tilde{\pi}')$ for $s_0=1+it_\chi$.
\end{itemize}
These three cases correspond to the three lines in \eqref{eqn:D_def}, with $f_1(s),\dotsc,f_{m}(s)$ occurring as $m$ factors on the relevant line.  We note that regardless of the value of $s_0$, $\Gamma(s-\beta)$ is always a factor of $f_0(s)$.

We now apply \cref{lem:residue} in conjunction with \cref{lem:Li1} and \eqref{eqn:BH}. The functions $f_0(s),\dotsc,f_m(s)$ defined above are holomorphic in the open disk $|s-s_0|<\min(1-\beta,|t_\chi|)$. Moreover,
\[
|f_j(s_0)|=|L(1,\pi\times(\pi'\otimes\chi))|,\qquad j\in\{1,\dots,m\}.
\]
We are finished upon noting that, at the point $s_0$, the $k$-th derivative of $s\mapsto x^{s-\beta}$ is bounded by $x^{\epsilon/2}(\log x)^k$, while integration by parts yields
\begin{align*}
\left|\Gamma^{(k)}(s_0-\beta)\right|
&\leq\int_0^\infty r^{-\beta}|\log r|^k e^{-r}\,dr\\
&\leq\int_0^1 r^{-\beta}(-\log r)^k\,dr+\int_1^\infty(r-1)^k e^{-r}\,dr\\
&=\frac{k!}{(1-\beta)^{k+1}}+\frac{k!}{e}.
\end{align*}

Second, assume that exactly one of $\pi\otimes\chi^*=\pi$ and $\pi'\otimes\chi^*=\pi'$ holds true. Then $|t_{\chi}|\geq Q^{-\epsilon/8}$ by hypothesis, and $\mathcal{S}=\{1,1-it_\chi,1+it_\chi\}$. For each choice of $s_0\in\mathcal{S}$, let $m$ be the order of the pole of $D(s)$ at $s=s_0$. Specifically, $m=4$ for $s_0=1$, and $m=1$ for $s_0=1\pm it_\chi$. Consider the decomposition \eqref{eqn:decomposition}, where
\begin{itemize}
\item $f_1(s)=f_2(s)=L(s+it_\chi,\pi\times\pi')$ and $f_3(s)=f_4(s)=L(s-it_\chi,\tilde{\pi}\times\tilde{\pi}')$ for $s_0=1$;
\item $f_1(s)=L(s,\tilde\pi\times\tilde\pi')$ for $s_0=1-it_\chi$;
\item $f_1(s)=L(s,\pi\times\pi')$ for $s_0=1+it_\chi$.
\end{itemize}
From here, we proceed exactly as in the previous case.

Finally, assume that $\pi\otimes\chi^*\neq\pi$ and $\pi'\otimes\chi^*\neq\pi'$. Then $\mathcal{S}=\{1\}$ by hypothesis. The order of the pole of $D(s)$ at $s=1$ is 4. We consider the decomposition \eqref{eqn:decomposition}, where 
\[
f_1(s)=f_2(s)=L(s,\pi\times(\pi'\otimes\chi))\quad\text{and}\quad f_3(s)=f_4(s)=L(s,\tilde\pi\times(\tilde\pi'\otimes\bar{\chi})).
\]
These four factors occur on the first line of \eqref{eqn:D_def}, and we finish as in the other cases.
\end{proof}

\begin{proof}[Proof of \cref{prop:P1}]
We employ \cref{thm:standard_region} and Lemmata~\ref{tpiformula}, \ref{lem:Li1}, \ref{lem:nonneg}, \ref{lem:GHL}, and \ref{lem:residue_bounds} to prove \cref{prop:P1}.  First, we deduce \eqref{ZFR_beta} when $L(s,\pi\times(\pi'\otimes\chi^2))$ is entire, in which case all three $L$-functions in \eqref{eqn:Lfns} are entire by the initial assumptions.  Let $D(s)=D(s;\pi,\pi',\chi)$ be as in \eqref{eqn:D_def}, let $\mathcal{S}$ be the set of its poles, and let $x>1$ be a parameter to be chosen later. By assumption, there exists $\beta\in[1-\epsilon/16,1)$ such that $L(\beta,\pi\times\pi')=0$. Hence $D(\beta)=0$, and $\mathcal{S}$ is also the set of poles of $D(s)x^{s-\beta}\Gamma(s-\beta)$ in the half-plane $\Re(s)>0$.  We shall use this below.

If $\lambda_{D}(\ka)$ is the $\ka$-th Dirichlet coefficient of $D(s)$, then $\lambda_{D}(\ka)\geq 0$ by \cref{lem:nonneg}. Since $\lambda_{D}(\cO_F)=1$, we have by the residue theorem
\begin{align*}
\frac{1}{e}\leq \sum_{\ka}\frac{\lambda_{D}(\ka)}{\N\ka^{\beta}}e^{-\frac{\N\ka}{x}}&=\frac{1}{2\pi i}\int_{1-i\infty}^{1+i\infty}D(s+\beta)x^s\Gamma(s)\,ds\\
&=\sum_{s_0\in\mathcal{S}}\mathop{\mathrm{Res}}_{s=s_0}D(s)x^{s-\beta}\Gamma(s-\beta)
+\frac{1}{2\pi i}\int_{1/2-i\infty}^{1/2+i\infty}D(s)x^{s-\beta}\Gamma(s-\beta)\,ds.
\end{align*}
We estimate the sum over $\mathcal{S}$ by \cref{lem:residue_bounds} (with $\epsilon$ replaced by $\epsilon/8$), and the last integral by
\cref{lem:Li1} (with $\epsilon$ replaced by $\epsilon/32$) combined with \eqref{eqn:BH} and Stirling's formula. We conclude that
\begin{equation}
\label{eqn:previousbound}
1\ll_{n,n',[F:\Q],\epsilon} \left((1-\beta)^{-4}|L(1,\pi\times(\pi'\otimes\chi))|+Qx^{-1/2}\right)(Qx)^{\epsilon/8}.	
\end{equation}

At this point, we choose
\[
x = \max\left\{1,(1-\beta)^{8}Q^{2}|L(1,\pi\times(\pi'\otimes\chi))|^{-2}\right\}.
\]
If $x=1$, then
\[
|L(1,\pi\times(\pi'\otimes\chi))|\geq (1-\beta)^4 Q > (1-\beta)^4 Q^{-\epsilon/2}.
\]
Otherwise, $x=(1-\beta)^{8} Q^{2}|L(1,\pi\times(\pi'\otimes\chi))|^{-2}>1$, so \eqref{eqn:previousbound} yields
\[
|L(1,\pi\times(\pi'\otimes\chi))|\gg_{n,n',[F:\Q],\epsilon} (1-\beta)^{4}Q^{-3\epsilon/(8-2\epsilon)}>(1-\beta)^{4}Q^{-\epsilon/2}.
\]
Either way, we have established the lower bound
\begin{equation}
\label{eqn:L1_lower}
|L(1,\pi\times(\pi'\otimes\chi))|\gg_{n,n',[F:\Q],\epsilon}(1-\beta)^{4}Q^{-\epsilon/2}.
\end{equation}

We apply \cref{lem:GHL} with $\epsilon$ replaced by $\epsilon/64$, so we use \eqref{eqn:GHL_interval} with $\epsilon 2^{-14}Q^{-\epsilon/64}$ in place of $\epsilon 2^{-8}Q^{-\epsilon}$.  We can assume that $L(s,\pi\times(\pi'\otimes\chi))$ has a zero in the interval $[1-\epsilon 2^{-14}Q^{-\epsilon/64},1)$, for otherwise the conclusion \eqref{ZFR_beta} is clear. Let $\beta_\chi$ denote the largest such zero.  Since $\beta_\chi$ is real, it is also a zero of $L(s,\tilde\pi\times(\tilde\pi'\otimes\bar{\chi}))$ per \eqref{eqn:conjugate_symmetry}. Therefore, \cref{lem:GHL} ensures that $\beta$ lies outside the said interval:
\begin{equation}
\label{eqn:beta_lower}
1-\beta\gg_\epsilon Q^{-\epsilon/64}.
\end{equation}
We combine \eqref{eqn:L1_lower} and \eqref{eqn:beta_lower} with the straightforward inequality
\begin{equation}
\label{eqn:MVT}
|L(1,\pi\times(\pi'\otimes\chi))|\leq(1-\beta_\chi)\sup_{\sigma\in[\beta_\chi,1]}|L'(\sigma,\pi\times(\pi'\otimes\chi))|.
\end{equation}
The left-hand side of \eqref{eqn:MVT} is bounded from below per \eqref{eqn:L1_lower} and \eqref{eqn:beta_lower}, and the right-hand side of \eqref{eqn:MVT} is $\ll_{n,n',[F:\Q],\epsilon}(1-\beta_\chi) Q^{\epsilon/3}$ per \cref{lem:Li1} and \eqref{eqn:BH}.  Upon solving for $\beta_\chi$, we conclude \eqref{ZFR_beta} in the case when $L(s,\pi\times(\pi'\otimes\chi^2))$ is entire.

Now we deduce \eqref{ZFR_beta} when $L(s,\pi\times(\pi'\otimes\chi^2))$ has a pole. In this case, there exists a unique $t\in\R$ such that $\pi'\otimes\chi^2=\tilde\pi\otimes|\cdot|^{it}$. We calculate $t=t_\pi+t_{\pi'}+2t_\chi$. Consider the Hecke character $\kappa=\bar\chi|\cdot|^{it}$, which satisfies $\tilde\pi\otimes\kappa=\pi'\otimes\chi$. The estimate
\begin{equation}\label{eq:ckappabound}
C(\kappa)\leq C(\chi)(|t|+3)^{[F:\Q]}<(C(\pi)C(\pi')C(\chi)^3)^{[F:\Q]}
\end{equation}
follows from \eqref{eq:tpibound} and \eqref{eqn:BH}.  Moreover, the $L$-function
\[L(s,\pi\times(\tilde\pi\otimes\kappa^2))=L(s+it,\pi\times\pi')\]
is entire by assumption. Therefore,  \cref{thm:standard_region} and \cref{conductorremark} yield that
\[
L(\sigma,\pi\times(\tilde\pi\otimes\kappa))\neq 0,\qquad
\sigma\geq 1-\frac{1}{903\log(C(\pi)^{2n} C(\kappa)^{n^2})}.
\]
Using \eqref{eq:ckappabound} and the identity $L(\sigma,\pi\times(\tilde\pi\otimes\kappa))=L(\sigma,\pi\times(\pi'\otimes\chi))$, we conclude \eqref{ZFR_beta} in the case when $L(s,\pi\times(\pi'\otimes\chi^2))$ has a pole.
\end{proof}

\begin{proof}[Proof of \cref{prop:P2}] The argument is similar to but simpler than the proof of \cref{prop:P1}, so we shall be somewhat brief. Instead of $D(s)$ as in \eqref{eqn:D_def}, we work with $H(s)$ as in \eqref{eqn:D_def2}. By assumption, there exists $\beta\in[1-\epsilon/16,1)$ such that $L(\beta,\pi\times\tilde\pi)=0$. Hence for any $x>1$, the only pole of $H(s)x^{s-\beta}\Gamma(s-\beta)$ in the half-plane $\Re(s)>0$ is $s=1$, which is a double pole.

By mimicking the residue calculation in the proof of \cref{lem:residue_bounds}, and applying also Lemmata~\ref{lem:Li1} and \ref{lem:nonneg}, we obtain the following analogue of \eqref{eqn:previousbound}:
\[
1\ll_{n,n',[F:\Q],\epsilon} \left((1-\beta)^{-2}|L(1,\pi\times(\pi'\otimes\chi))|+Q^{1/4}x^{-1/2}\right)(Qx)^{\epsilon/8}.	
\]
By optimizing the parameter $x>1$, we obtain the analogue of \eqref{eqn:L1_lower}:
\begin{equation}
\label{eqn:L1_lower2}
|L(1,\pi\times(\pi'\otimes\chi))|\gg_{n,n',[F:\Q],\epsilon}(1-\beta)^{2}Q^{-\epsilon/4}.
\end{equation}
We can assume that $L(s,\pi\times(\pi'\otimes\chi))$ has a zero in the interval
\begin{equation}
\label{niceinterval}
\sigma\geq 1-\frac{\min\{\epsilon,\Cr{GHLnew2}\}}{\log(C(\pi)C(\pi'\otimes\chi))},
\end{equation}
for otherwise the conclusion \eqref{ZFR_beta2} is clear. Let $\beta_\chi$ denote the largest such zero.  Since $\beta_\chi$ is real, it is also a zero of $L(s,\tilde\pi\times(\tilde\pi'\otimes\bar{\chi}))$ per \eqref{eqn:conjugate_symmetry}. Therefore, \cref{lem:GHL2} ensures that $\beta$ lies outside the interval \eqref{niceinterval}. Now \eqref{eqn:L1_lower2} combined with \cref{lem:Li1} and \eqref{eqn:BH} yields
\[
Q^{-\epsilon/2}\ll_{n,n',[F:\Q],\epsilon}|L(1,\pi\times(\pi'\otimes\chi))|\ll_{n,n',[F:\Q],\epsilon}(1-\beta_\chi) Q^{\epsilon/2}.
\]
Upon solving for $\beta_\chi$, we conclude \eqref{ZFR_beta2}.
\end{proof}

\section{Proof of \cref{thm:Tatuzawa}}

We employ \cref{lem:GHL2} and Propositions~\ref{prop:P1} \& \ref{prop:P2} to prove \cref{thm:Tatuzawa}. For ease of notation, we shall seek zero-free intervals of the form
\begin{equation}
\label{niceinterval2}
\sigma\geq 1-\Cl[abcon]{ZFR6}(C(\pi)C(\pi')C(\chi)^{2n+2n'})^{-\epsilon},
\end{equation}
where 
$\Cr{ZFR6}=\Cr{ZFR6}(n,n',[F:\Q],\epsilon)>0$ is an effectively computable constant. Without loss of generality, $\epsilon\in(0,1)$. Consider
\[
\epsilon'=\frac{\epsilon}{2(n+n')}.
\]

Assume first that $L(s,\pi\times\tilde\pi)$ has a zero in $[1-\epsilon'/16,1)$. Let $\Cr{ZFR6}>0$ be suitably small. If $L(s,\pi\times(\pi'\otimes\chi))$ is entire, then by \cref{prop:P2} and \eqref{eqn:Rdef}, it has no zero in the interval \eqref{niceinterval2}. If $L(s,\pi\times(\pi'\otimes\chi))$ has a pole, then there exists a unique $t\in\R$ such that $\pi'\otimes\chi=\tilde\pi\otimes|\cdot|^{it}$.
By \cite[Theorem~5.1]{HumphriesThorner}, or by the more general \cref{thm:standard_region}, the corresponding $L$-function
\[L(s,\pi\times(\pi'\otimes\chi))=L(s+it,\pi\times\tilde\pi)\]
has no zero in the interval \eqref{niceinterval2} when $t\neq 0$, while it has at most one zero (necessarily simple) in the same interval when $t=0$. So we proved \cref{thm:Tatuzawa} in this case.

Assume now that $L(s,\pi\times\tilde\pi)$ has no zero in $[1-\epsilon'/16,1)$. Let $\Cr{ZFR6}>0$ be suitably small. If $L(s,\pi\times(\pi'\otimes\chi))$ has a pole, then there exists a unique $t\in\R$ such that $\pi'\otimes\chi=\tilde\pi\otimes|\cdot|^{it}$.
By \cite[Theorem~5.1]{HumphriesThorner}, or by the more general \cref{thm:standard_region}, coupled with our initial assumption, the corresponding $L$-function
\[L(s,\pi\times(\pi'\otimes\chi))=L(s+it,\pi\times\tilde\pi)\]
has no zero in the interval \eqref{niceinterval2}. Hence it suffices to show that among all the \emph{entire} twists $L(s,\pi\times(\pi'\otimes\chi))$, only one can have a zero in the interval \eqref{niceinterval2}, and even this exceptional twist can only have one zero (necessarily simple) in the same interval.

From now on, we restrict to $\chi$ lying in
\[
S=\left\{\chi\in\mathfrak{F}_1:\text{$L(s,\pi\times(\pi'\otimes\chi))$ is entire}\right\}.
\]
This set is nonempty, of course. As a first step towards our goal, we prove the existence of a constant $0<\Cr{ZFR6}\leq\epsilon'/16$ and a character $\lambda=\lambda_{\pi,\pi',\epsilon}\in S$ such that for all $\chi\in S$, one of the following two statements is true.
\begin{enumerate}
	\item $L(s,\pi\times(\pi'\otimes\chi))=L(s+it,\pi\times(\pi'\otimes\lambda))$ 
	with $t=t_\chi-t_\lambda$ of absolute value less than $(C(\pi)C(\pi'\otimes\chi)C(\pi'\otimes\lambda))^{-\epsilon/128}$.
	\item $L(\sigma,\pi\times(\pi'\otimes\chi))\neq 0$ in the interval \eqref{niceinterval2}.
\end{enumerate}
If, for all $\chi\in S$, the $L$-function $L(s,\pi\times(\pi'\otimes\chi))$ has no zero in $[1-\epsilon'/16,1)$, then we are finished. Otherwise, by 
a straightforward compactness argument and Shahidi's nonvanishing theorem \cite[Theorem~5.2]{Shahidi}, there exists $\lambda\in S$ with $C(\lambda)$ minimal such that $L(s,\pi\times(\pi'\otimes\lambda))$ has a zero in $[1-\epsilon'/16,1)$.

We fix $\lambda\in S$ as above, and we consider an arbitrary $\chi\in S$. If $C(\chi)<C(\lambda)$, then $L(s,\pi\times(\pi'\otimes\chi))$ does not vanish in $[1-\epsilon'/16,1)$ by the minimality of $C(\lambda)$, hence item (2) holds. From now on, we assume that $C(\chi)\geq C(\lambda)$, and we define
\[
\pi''=\pi'\otimes\lambda,\qquad \chi'=\chi\bar{\lambda}.
\]
The $L$-function $L(s,\pi\times\pi'')$ is entire and has a zero in $[1-\epsilon'/16,1)$. Moreover, the $L$-function
\begin{equation}\label{eqn:newtwist}
L(s,\pi\times(\pi'\otimes\chi))=L(s,\pi\times(\pi''\otimes\chi'))
\end{equation}
is entire. Hence, we shall apply \cref{prop:P1} with $(\pi,\pi'',\chi',\epsilon')$ in the role of $(\pi,\pi',\chi,\epsilon)$. Accordingly, we work with the following variant of \eqref{eqn:Rdef}:
\[
Q=Q(\pi,\pi'',\chi')=(C(\pi)C(\pi''))^{2(n+n')}C(\chi')^{(n+n')^2}.
\]
It follows from  $C(\lambda)\leq C(\chi)$ and \eqref{eqn:BH} that
\begin{equation}
\label{eqn:Qbound}
(C(\pi)C(\pi'\otimes\lambda)C(\pi'\otimes\chi))^{n+n'}<Q<(C(\pi)C(\pi'))^{2(n+n')}C(\chi)^{4(n+n')^2}.
\end{equation}

If the twist equivalence condition of \cref{prop:P1} is satisfied for $(\pi,\pi'',\chi')$ in place of $(\pi,\pi',\chi)$, and the constant $0<\Cr{ZFR6}\leq\epsilon'/16$ is suitably small, then by \eqref{ZFR_beta}, \eqref{eqn:newtwist}, and \eqref{eqn:Qbound}, we see that item (2) holds. Otherwise,
the relation
\[
\pi\otimes\chi^*=\pi\otimes\lambda^*\qquad\text{or}\qquad\pi'\otimes\chi^*=\pi'\otimes\lambda^*
\]
and the bound 
\[
|t_\chi-t_{\lambda}|<Q^{-\epsilon'/64}<(C(\pi)C(\pi'\otimes\chi)C(\pi'\otimes\lambda))^{-\epsilon/128}
\]
are simultaneously true. Therefore, by \eqref{eqn:Qbound} and the identity
\begin{align*}
L(s,\pi\times(\pi'\otimes\chi))
&=L(s+it_\chi,\pi\times(\pi'\otimes\chi^*))\\
&=L(s+it_\chi,\pi\times(\pi'\otimes\lambda^*))\\
&=L(s+it_\chi-it_{\lambda},\pi\times(\pi'\otimes\lambda)),
\end{align*}
item (1) holds. 

With the constant $0<\Cr{ZFR6}\leq\epsilon'/16$ and the character $\lambda\in S$ in hand, we can finish the proof of \cref{thm:Tatuzawa}. By applying \eqref{eqn:D_def2} and \cref{lem:GHL2} with $\lambda$ in the role of $\chi$, we infer that $L(s,\pi\times(\pi'\otimes\lambda))$ has at most one zero (counted with multiplicity) in the region
\begin{equation}
\label{niceregion2}
\sigma\geq 1-\frac{\Cr{GHLnew2}}{\log(C(\pi)C(\pi'\otimes\lambda))}
\qquad\text{and}\qquad
|t|\leq\frac{\Cr{GHLnew2}}{\log(C(\pi)C(\pi'\otimes\lambda))}.
\end{equation}
On the other hand, if we define the constant $\Cl[abcon]{ZFR2}=\Cr{ZFR2}(n,n',\epsilon)>0$ by the equation 
$(256/\epsilon)\Cr{ZFR2}^{-\epsilon/256}=\Cr{GHLnew2}$, then for all $\chi\in S$ satisfying $C(\pi)C(\pi'\otimes\chi)\geq\Cr{ZFR2}$, we have that
\begin{equation}
\label{epslog1}
(C(\pi)C(\pi'\otimes\chi)C(\pi'\otimes\lambda))^{-\epsilon/128}\leq\Cr{GHLnew2}/\log(C(\pi)C(\pi'\otimes\lambda)).
\end{equation}
By Brumley's effective zero-free region \eqref{eqn:Brumley_ZFR}, we can lower the constant $\Cr{ZFR6}>0$ so that item (2) holds whenever $C(\pi)C(\pi'\otimes\chi)<\Cr{ZFR2}$. As observed earlier, item (2) also holds whenever $C(\chi)<C(\lambda)$. By further lowering the constant $\Cr{ZFR6}>0$ and applying \eqref{eqn:BH}, we can achieve that
\begin{equation}
\label{epslog2}
\Cr{ZFR6}(C(\pi)C(\pi')C(\lambda)^{2n+2n'})^{-\epsilon}\leq\Cr{GHLnew2}/\log(C(\pi)C(\pi'\otimes\lambda)).
\end{equation}

Now let $\chi\in S$ violate item (2). Then item (1) holds along with the bounds $C(\chi)\geq C(\lambda)$ and
\eqref{epslog1}--\eqref{epslog2}, hence we obtain a zero $\sigma+it$ of $L(s,\pi\times(\pi'\otimes\lambda))$ in the region \eqref{niceregion2}, with $t=t_\chi-t_\lambda$. By the above, this zero must be simple and uniquely determined, so the twist $L(s,\pi\times(\pi'\otimes\chi))$ is uniquely determined as well. So among all the \emph{entire} twists $L(s,\pi\times(\pi'\otimes\chi))$, only one can have a zero in the interval \eqref{niceinterval2}, and even this exceptional twist can only have one zero (necessarily simple) in the same interval.
The proof of \cref{thm:Tatuzawa} is complete.

\bibliographystyle{abbrv}
\bibliography{HarcosThorner_Tatuzawa}

\begin{thebibliography}{10}

\bibitem{Banks}
W.~D. Banks.
\newblock Twisted symmetric-square {$L$}-functions and the nonexistence of
  {S}iegel zeros on {${\rm GL}(3)$}.
\newblock {\em Duke Math. J.}, 87(2):343--353, 1997.

\bibitem{Brumley}
F.~Brumley.
\newblock Effective multiplicity one on $\mathrm{GL}_{N}$ and narrow zero-free
  regions for {R}ankin--{S}elberg {$L$}-functions.
\newblock {\em Amer. J. Math.}, 128(6):1455--1474, 2006.

\bibitem{Davenport}
H.~Davenport.
\newblock {\em Multiplicative number theory}, volume~74 of {\em Graduate Texts
  in Mathematics}.
\newblock Springer-Verlag, New York, third edition, 2000.
\newblock Revised and with a preface by Hugh L. Montgomery.

\bibitem{Estermann}
T.~Estermann.
\newblock On {D}irichlet's {$L$} functions.
\newblock {\em J. London Math. Soc.}, 23:275--279, 1948.

\bibitem{Goldfeld}
D.~M. Goldfeld.
\newblock A simple proof of {S}iegel's theorem.
\newblock {\em Proc. Nat. Acad. Sci. U.S.A.}, 71:1055, 1974.

\bibitem{HarcosThorner}
G.~Harcos and J.~Thorner.
\newblock A new zero-free region for {R}ankin--{S}elberg {$L$}-functions.
\newblock {\em J. Reine Angew. Math.}, 822:179--201, 2025.

\bibitem{HoffsteinLockhart}
J.~Hoffstein and P.~Lockhart.
\newblock Coefficients of {M}aass forms and the {S}iegel zero.
\newblock {\em Ann. of Math. (2)}, 140(1):161--181, 1994.
\newblock With an appendix by D. Goldfeld, J. Hoffstein and D. Lieman.

\bibitem{HoffsteinRamakrishnan}
J.~Hoffstein and D.~Ramakrishnan.
\newblock Siegel zeros and cusp forms.
\newblock {\em Int. Math. Res. Not.}, (6):279--308, 1995.

\bibitem{Humphries}
P.~Humphries and F.~Brumley.
\newblock Standard zero-free regions for {R}ankin--{S}elberg {$L$}-functions
  via sieve theory.
\newblock {\em Math. Z.}, 292(3-4):1105--1122, 2019.

\bibitem{HumphriesThorner}
P.~Humphries and J.~Thorner.
\newblock Towards a $\mathrm{GL}_n$ variant of the {H}oheisel phenomenon.
\newblock {\em Trans. Amer. Math. Soc.}, 375(3):1801--1824, 2022.

\bibitem{HumphriesThorner2}
P.~Humphries and J.~Thorner.
\newblock Zeros of {R}ankin--{S}elberg {$L$}-functions in families.
\newblock {\em Compos. Math.}, 160(5):1041--1072, 2024.

\bibitem{IM}
Y.~Ichihara and K.~Matsumoto.
\newblock On the {S}iegel--{T}atuzawa theorem for a class of {$L$}-functions.
\newblock {\em Kyushu J. Math.}, 62(1):201--215, 2008.

\bibitem{JPSS}
H.~Jacquet, I.~I. Piatetskii-Shapiro, and J.~A. Shalika.
\newblock Rankin--{S}elberg convolutions.
\newblock {\em Amer. J. Math.}, 105(2):367--464, 1983.

\bibitem{JacquetShalika}
H.~Jacquet and J.~A. Shalika.
\newblock A non-vanishing theorem for zeta functions of {${\rm GL}_{n}$}.
\newblock {\em Invent. Math.}, 38(1):1--16, 1976/77.

\bibitem{JS1}
H.~Jacquet and J.~A. Shalika.
\newblock On {E}uler products and the classification of automorphic
  representations. {I}.
\newblock {\em Amer. J. Math.}, 103(3):499--558, 1981.

\bibitem{JS2}
H.~Jacquet and J.~A. Shalika.
\newblock On {E}uler products and the classification of automorphic
  representations. {II}.
\newblock {\em Amer. J. Math.}, 103(4):777--815, 1981.

\bibitem{Knapp}
A.~W. Knapp.
\newblock Local {L}anglands correspondence: the {A}rchimedean case.
\newblock In {\em Motives ({S}eattle, {WA}, 1991)}, volume~55 of {\em Proc.
  Sympos. Pure Math.}, pages 393--410. Amer. Math. Soc., Providence, RI, 1994.

\bibitem{Lapid}
E.~Lapid.
\newblock On the {H}arish-{C}handra {S}chwartz space of {$G(F)\backslash
  G(\mathbb{A})$}.
\newblock In {\em Automorphic representations and {$L$}-functions}, volume~22
  of {\em Tata Inst. Fundam. Res. Stud. Math.}, pages 335--377. Tata Inst.
  Fund. Res., Mumbai, 2013.
\newblock With an appendix by F. Brumley.

\bibitem{Luo}
W.~Luo.
\newblock Non-existence of {S}iegel zeros for cuspidal functorial products on
  {$GL(2) \times GL(3)$}.
\newblock {\em Proc. Amer. Math. Soc.}, 151(5):1915--1919, 2023.

\bibitem{RamakrishnanWang}
D.~Ramakrishnan and S.~Wang.
\newblock On the exceptional zeros of {R}ankin--{S}elberg {$L$}-functions.
\newblock {\em Compositio Math.}, 135(2):211--244, 2003.

\bibitem{Shahidi}
F.~Shahidi.
\newblock On certain {$L$}-functions.
\newblock {\em Amer. J. Math.}, 103(2):297--355, 1981.

\bibitem{Siegel}
C.~L. Siegel.
\newblock {\"U}ber die {C}lassenzahl quadratischer {Z}ahlk{\"o}rper.
\newblock {\em Acta. Arith.}, 1(1):83--86, 1935.

\bibitem{Tatuzawa}
T.~Tatuzawa.
\newblock On a theorem of {S}iegel.
\newblock {\em Jpn. J. Math.}, 21:163--178 (1952), 1951.

\bibitem{Thorner_Siegel1}
J.~{Thorner}.
\newblock {Exceptional zeros of Rankin-Selberg $L$-functions and joint
  Sato-Tate distributions}.
\newblock {\em arXiv e-prints}, page arXiv:2404.06482, Apr. 2024.

\bibitem{Thorner_Siegel2}
J.~{Thorner}.
\newblock {Exceptional zeros of $\mathrm{GL}_3\times\mathrm{GL}_3$
  Rankin-Selberg $L$-functions}.
\newblock {\em arXiv e-prints}, page arXiv:2508.09984, Aug. 2025.

\bibitem{Wattanawanichkul}
N.~{Wattanawanichkul}.
\newblock {A metric approach to zero-free regions for $L$-functions}.
\newblock {\em arXiv e-prints}, page arXiv:2504.05606, Apr. 2025.

\bibitem{Zhang}
Q.~Zhang.
\newblock Lower bounds for {R}ankin-{S}elberg {$L$}-functions on the edge of
  the critical strip.
\newblock {\em Acta Arith.}, 211(2):161--171, 2023.

\end{thebibliography}

\end{document}